\DeclareMathOperator{\ler}{ler}
\DeclareMathOperator{\pt}{pt}
\DeclareMathOperator{\op}{op}
\theoremstyle{plain}
\newtheorem*{theorem*}{Theorem}
\newtheorem{thm}{Theorem}[section]
\newtheorem{lem}[thm]{Lemma}
\newtheorem{prop}[thm]{Proposition}
\theoremstyle{definition}
\newtheorem{dfn}[thm]{Definition}
\newtheorem{ej}[thm]{Example}
\newcommand\adj[2]{{
  \ar[from=#1, to=#2,"\dashv"marking,draw=none]
}}
\begin{document}
%\begin{frontmatter}
\title[Article Title]{On non-archimedean frames}

\author{M. Bocardo-gaspar$^1$}
\address[1,4]{Department of Mathematics \\University of Guadalajara \\ Blvd. Gral. Mar-celino Garc\'{i}a Barrag\'{a}n \#1421, Ol\'{i}mpica 44430 \\ Guadalajara, Jalisco, M\'{e}xico}
\email{miriam.bocardo@academicos.udg.mx}
\email{luis.zaldivar@academicos.udg.mx}

\author{J. Urenda $^2$}
\address[2]{Department of Mathematical Sciences \\ University of Texas at El Paso \\ 500 West University Avenue, 79968  \\ El Paso, Texas, USA}
\email{jcurenda@utep.edu}

\author{F. Ávila $^3$}
\address[3]{Department of Physics and Mathematics \\ Autonomous University of Ciudad Ju\'{a}rez \\ Av. Plutarco El\'{i}as Calles \#1210, 32310 \\Ciudad Ju\'{a}rez, Chihuahua, M\'{e}xico}
\email{favila@uacj.mx}

\author{A. Zaldívar $^4$}

%\address
%$\cortext[cor1]{Corresponding author}

\begin{abstract}
In this investigation, we introduce the class of non-archimedean frames in spirit with the topological notion of non-archimedean spaces. We attach a base to each non-archimedean frame that constitutes a tree, and then we show that every non-archimedean frame is a quotient of the frame of opens of the tree's branch space. Finally, we also explore the spatiality of a non-Archimedean frame and give partial answer to this problem.
\end{abstract}

\keywords{Frames, Point-free topology, Non-archimedean frames, Trees.}

\maketitle

%%%%%%%%%%%%%%%%%%%%%%%%%%%%%%%%%%%%%%%%%%%%%%%%%%%%%%%%%%%%%%%%%%%%%%%%%%%%%%%%%%%%%%%%%%%%
\section{Introduction}\label{intro}

In the category of frames one can describe some frames by generators and relations. For instance, in \cite{avila2020frame} the author introduces frame of $p$-adic numbers $\mathcal{L}(\mathbb{Q}_p)$, which is the free frame generated by the elements $B_r(a)$, where $a\in \mathbb{Q}$ and $r\in|\mathbb{Q}|:=\{p^{-n}\mid n\in \mathbb{Z}\}$, subject to the following relations:
\begin{itemize}
	\item[(1)] $B_r(a) \wedge B_s(b)=0$ whenever $|a-b|_p\geq r\vee s$. 
	\item[(2)] $1=\bigvee\{B_r(a):a\in\mathbb{Q},r\in |\mathbb{Q}|\}$.
	\item[(3)] $B_r(a)=\bigvee\{B_s(b)\colon |a-b|_p< r, s<r, r\in |\mathbb{Q}|\}$.
\end{itemize}
Note that these relations imply that the set  \[\mathcal{B}=\{B_{r}(a)\colon r\in|\mathbb{Q}|, a\in\mathbb{Q}\}\] is a base for $\mathcal{L}(\mathbb{Q}_p)$. On the other hand, let $B_r(a)$, $B_s(b)$ be any two elements in $\mathcal{L}(\mathbb{Q}_p)$ and, without loss of generality, assume that $s\leq r$. Then, if $|a-b|_p\geq r$, we have  $B_r(a)\wedge B_s(b)=0$ by relation (1), and if $|a-b|_p< r$, we have $B_s(b)\leq B_{r}(a)$ by relation (3). Thus, for any $B_r(a), B_s(b) \in \mathcal{B}$, either
\[B_r(a)\wedge B_s(b)=0\, \text{ or }\, B_s(b)\leq B_{r}(a)\, \text{ or }\, B_s(b)\geq B_{r}(a). \]

Now, recall that a \emph{non-archimedean} topological space $S$ is a Hausdorff space with a base $\mathcal{B}$ satisfying the \emph{trichotomy} laws: if $B_1$, $B_2\in\mathcal{B}$, we have that either $B_1\cap B_2=\varnothing \text{ or } B_1\subseteq B_2 \text{ or }  B_2\subseteq B_1$ holds 
(see, e.g., \cite{monna1950remarques}, \cite{nyikos1975some}, \cite{nyikos1975structure}, \cite{nyikos1999some}). Motivated by this, we say that a frame is non-archimedean if it has a (non-archimedean) base $\mathcal{B}$ that satisfies these trichotomy laws: if  $b_{1},b_{2}\in\mathcal{B}$, then either
$b_{1}\wedge b_{2}=0$ or $b_{1}\leq b_{2}$ or $b_{2}\leq b_{1}$ holds.

It follows that  $\displaystyle{\mathcal{B}:=\{B_{r}(a):r\in|\mathbb{Q}|, a\in\mathbb{Q}\}}$ is a non-archimedean base for the frame $\mathcal{L}(\mathbb{Q}_p)$, and that $\mathcal{L}(\mathbb{Q}_p)$ is a non-archimedean frame.

The frame of the Cantor set, denoted by $\mathcal{L}(\mathbb{Z}_p)$, is another example of a non-archimedean frame. It is defined using the fact that the $p$-adic integers $\mathbb{Z}_p$ is homeomorphic to the Cantor set for every prime number $p$. In \cite{avila2022cantor}, it is shown that $\mathcal{L}(\mathbb{Z}_p)$ is a $0$-dimensional, (completely) regular, compact, and metrizable frame whose Cantor-Bendixson derivative is zero (see, e.g., \cite{Simmons2014Cantor}).

Moreover, for every non-archimedean normed field $(\mathbb{K},|\cdot|_\mathbb{K})$ we can define $\mathcal{L}(\mathbb{K})$ in a similar fashion. Then, $\mathcal{L}(\mathbb{K})$ is a non-archimedean frame. Moreover, if $S$ is a non-archimedean topological space, then $\mathcal{O}(S)$ is a (regular) non-archimedean frame. 

To the best of our knowledge, there is no literature on non-Archimedean frames motivating, in essence, the development of this work. In this manuscript, we introduce a general point-free theoretic framework (for non-archimedean structures) to address some classical topological situations in the broader context of frames.

The presentation of the manuscript is as follows: Section \ref{pre} contains the background needed to understand the rest of the manuscript.
Section \ref{non} defines the class of frames called \emph{non-archimedean}. In particular, we establish that every non-archimedean base has a tree structure  and prove one of the main results:

\begin{center}
\emph{A frame is non-archimedean if and only if it is a quotient of a topology}\\ \emph{of a branch space of a tree.} 

\end{center}

 Finally, the last section is devoted to introduce a type of monotone bar induction for trees , which allow us to obtain partial result on the spatiality of non-archimedean frames.

%%%%%%%%%%%%%%%%%%%%%%%%%%%%%%%%%%%%%%%%%%%%%%%%%%%%%%%%%%%%%%%%%%%%%%%%%%%%%%%%%%%%%%%%%%%%
\section{Preliminaries}\label{pre}

In this section, we recall some basic facts from frame (locale) theory. For details, we refer to \cite{Johnstone82} and \cite{PicadoPultr}.

A \emph{frame} (locale, complete Heyting algebra) is a complete lattice \[(A,\leq, \bigvee,\wedge,0,1)\] in which the following distributive law holds:

\[a\wedge(\bigvee X)=\bigvee\{a\wedge x\mid x\in X\},\] for every $X\subseteq A$ and $a\in A$.
Frames are, in specific ways, algebraic manifestations of topological spaces. The ubiquitous example of a frame is the topology of any topological space.

We denote by $\mathrm{Frm}$ the category of frames, with frames as objects and arrows are functions $h\colon A\rightarrow B$ such that

\begin{equation*} \label{frmhom}
\, h(a\wedge b)=h(a)\wedge h(b),\text{ and } h\left( \bigvee_{i\in J} a_i\right)=\bigvee_{i\in J} h(a_i), \text{for any set } J.
\end{equation*} As the theory suggests, there is an intrinsic relationship between the categories of topological spaces and frames. More specifically, there exists an adjunction between these categories:

\[
  \begin{tikzcd}
    \mathrm{Top}\ar[d,shift right=2,"\mathcal{ O}"'{name=L}]
    \\
\mathrm{Frm}^{\op}\ar[u,shift right=2,"\pt"'{name=R}]
\adj{L}{R}
\end{tikzcd}
\]
where $\mathcal{O}(S)$ is the lattice of open sets of the space $S$ and $\pt(A)$ is the \emph{point space} of the frame, which can be described in three different ways: as frame characters or completely prime filters or $\wedge-$irreducible elements. For more details, see, e.g., \cite{PicadoPultr}.

Every frame $A$ has a Heyting implication $(\_\succ\_)$ given by \[x\leq(a\succ b)\Leftrightarrow x\wedge a\leq b.\] Thus, for every element $a\in A$, its \emph{negation} is given by \[\neg a=(a\succ 0).\] That is, $\neg a$ is the greatest element such that $a\wedge x=0$. An element $a\in A$ is \emph{complemented} if there exists an element (necessarily unique) $b\in A$ such that $a\vee b=1$ and $a\wedge b=0$; in fact, the complement of a complemented element $a$ is $\neg a$. 

As usual, the concept of base in topological spaces has its interpretation in frames: A subset $\mathcal{B}$ of a frame $A$ is a \emph{base} if, for each $a\in A$, we have \[a=\bigvee\{b\in\mathcal{B}\mid b\leq a\},\]
and we say that a frame is \emph{zero dimensional} if it has a base of complemented elements.

A central theme in pointfree topology is that fundamental topological notions, such as regularity, continuity, separation, can be generalized from topological spaces to frames (and locales), often in a more elegant and logically simpler form. In particular, the classical separation axioms have their counterparts in frames (see, e.g., \cite{picado2021separation}). We recall some of these notions that will be useful in this manuscript.

%A frame $A$ is \emph{fit} if for any $a,b\in A$ with $a\nleq b$ there exists $x,y\in A$ such that \[a\vee x=1,\;\;\;\; y\nleq b,\;\;\;\; x\wedge y\leq b.\]
%The notion of fitness is an essential algebraic property that does not have much immediate topological impact.  
We say that $A$ is \emph{regular} if for any $a,b\in A$, with $a\nleq b$, there are $x,y\in A$ such that \[a\vee x=1,\;\;\; y\nleq b,\;\; x\wedge y=0.\]
Equivalently, we define $a\prec b$  iff $\neg a \vee b=1$ (the way below relation) and one say that a frame is regular if \[a=\bigvee\{x\in A\mid x\prec a\}
,\] for every $a\in A$.

This definition agrees with the classical one: A topological space $S$ is regular if and only if the frame $\mathcal{O}(S)$ is regular.

We say that $a$ is \emph{completely below} $b$, denoted $a\prec\!\!\prec b$,  if there are $a_r \in A$, with $r$ rational and $0\leq r \leq 1$, such that
\[a_0=a,\;\;\;\; a_1=b,\;\;\text{ and }\;\; a_r\prec a_s\, \text{ for } r<s.\]
Then, a frame $A$ is said to be \emph{completely regular} if
\[a=\bigvee\{x\mid x\prec\!\!\prec a,\}\]  for every $ a\in A$.
This is also an extension of the corresponding notion in classical topology: A topological space $S$ is completely regular if and only if the frame $\mathcal{O}(S)$ is completely regular.

Crucial to the algebraic study of frames is the notion of a \emph{nucleus}, that is, a function $j\colon A\rightarrow A$ such that:
\begin{enumerate}[(i)]
\item $a\leq b\Rightarrow j(a)\leq j(b)$.
\item $a\leq j(a)$ for all $a\in A$.
\item $j(a)\wedge j(b)\leq j(a\wedge b).$
\item $j^{2}=j$.
\end{enumerate}

The set of all nuclei, denoted by $\mathcal{N}(A)$, is a frame which is often called \emph{the assembly}. For every $j\in \mathcal{N}(A)$, the set of fixed elements $A_{j}=\{a\mid j(a)=a\}$ is a frame; the function $j^{*}\colon A\rightarrow A_{j}$, defined by $a\mapsto j(a)$, is a surjective frame morphism, and every quotient of $A$ is identified with the fixed set of a nucleus.

Another essential tool is a \emph{coverage}, which is a generalization of the notion of a topology that allows for systematic generation of frames by specifying relations of covering.  
This concept was introduced in \cite{Johnstone82} and further developed in \cite{simmons2004coverage} and \cite{gambino2007spatiality} (see \cite{ball2016dedekind} for some uses).
Although coverages are typically defined using downsets, the dual notion employing upsets is conceptually equivalent and can be used interchangeably. The upset approach is particularly well-suited for trees, as their natural hierarchical structure aligns intuitively with the concept of covering by upsets.

A \emph{coverage} on a poset $(\EuScript{T},\leq)$ is a relation $\vdash$ between the set of  upsets of $\EuScript{T}$, denoted by $\mathcal{U}(\EuScript{T})$, and elements of $\EuScript{T}$. It is well known that any coverage defines an inflator (a monotone, inflatory function) $d\colon\mathcal{U}(\EuScript{T})\rightarrow\mathcal{U}(\EuScript{T})$ given by:
\[a\in d(U)\Leftrightarrow U\vdash a,\] for every $U\in\mathcal{U}(\EuScript{T})$ and $a\in A$. 
If the coverage satisfies specific rules (see after definition \ref{cov}), we obtain a nucleus and, thus, a quotient of $\mathcal{U}(\EuScript{T})$. This is explored in \cite{simmons2004coverage} and \cite{simmons2007coverage} (we use these ideas after Theorem \ref{branchi} to untangle the spatiality of non-archimedean frames).

Finally, free frames and quotient frames define structures by generators and relations. We take the quotient of the free frame $\mathbb{F}$ on the set $\mathcal{B}$ of generators modulo the relation generated by the pairs $(u,v)$ for the stipulated relations $u=v$ (or $u\leq v$). When taking this quotient, one describes the relations by a system of formal equalities, i.e., if $u, v$ are formulas to identify, we write $u=v$ instead of $uRv$. 

Note that defining morphisms from a presented frame  $A/R \rightarrow B$ it is enough to have a morphism $h\colon A \to B$ for which $uRv$ implies $h(u)=h(v)$  and then a factorization define a morphism $A/R \rightarrow B$ (see, e.g. \cite{banaschewski1997real}, \cite{PicadoPultr}, \cite{picado2021separation}).

%%%%%%%%%%%%%%%%%%%%%%%%%%%%%%%%%%%%%%%%%%%%%%%%%%%%%%%%%%%%%%%%%%%%%%%%%%%%%%%%%%%%%%%%%%%%
\section{Non-archimedean frames}\label{non}

Motivated by the classic topological notion of non-archimedean space (as previously introduced) we give the frame-theoretic analogue.
\begin{dfn}\label{fr}
Let $A$ be a frame, a base $\mathcal{B}$ of $A$ is called \emph{non-archimedean} if  $b_{1},b_{2}\in\mathcal{B}$, then either
\begin{itemize}
\item[(N1)]
$b_{1}\wedge b_{2}=0$, or
\item[(N2)]
$b_{1}\leq b_{2}$, or
\item[(N3)]
$b_{2}\leq b_{1}$.
\end{itemize}
holds.
\end{dfn}
\begin{dfn}\label{non1}
A \emph{non-archimedean frame} is a regular frame with a non-archimedean base.
\end{dfn}

As usual, we will see what happens with the quotients of a non-archimedean frame:

\begin{prop}\label{quot}

Let $A$ be an non-archimedean frame then $A_{j}$ is a non-archimedean frame for every $j\in \mathcal{N}(A)$.

\end{prop}

This straightforward statement gives a lot of examples. For instance, as we mentioned above, if $S$ is a non-archimedean space, then its topology $\mathcal{O}S$ is a non-archimedean frame, and then for every nucleus $j\in \mathcal{N}(\mathcal{O}S)$ we have that $\mathcal{O}S_{j}$ is a non-archimedean frame.

\begin{lem}
Let $A$ be a non-archimedean frame with base $\mathcal{B}$ and let $a\in \mathcal{B}$. If there is $b\neq 0$ such that $b\prec a$, then $a$ is complemented.
\end{lem}

\begin{proof}
Suppose there is $b\neq 0$ with $b\prec a$. Since $\mathcal{B}$ is a base, then \[\neg b =\bigvee\{x\in \mathcal{B}\mid x\wedge b=0\}.\]
Since $A$ is non-archimedean, for each $x\in\mathcal{B}$, we have that either $x\leq a$ or $a\leq x$ or $x\wedge a=0$. Note that if $a\leq x$ and $x\wedge b=0$, then $b\leq a \leq x\leq \neg b$ which is impossible. Thus, 
\[\neg b=\bigvee\{x\in\mathcal{B}\mid x\wedge b=0,\, x\leq a\}\vee\bigvee\{x\in\mathcal{B}\mid x\wedge b=0,\, x\wedge a=0\}.\]
Also note that
\[\bigvee\{x\in\mathcal{B}\mid x\wedge b=0,\, x\leq a\}\leq a \text{ and }  \bigvee\{x\in\mathcal{B}\mid x\wedge b=0,\, x\wedge a=0\}\leq \neg a.\]
So,
\begin{align*}
\neg b&=\bigvee\{x\in\mathcal{B}\mid x\wedge b=0,\, x\leq a\}\vee\bigvee\{x\in\mathcal{B}\mid x\wedge b=0,\, x\wedge a=0\}\\
&\leq \neg a \vee a.
\end{align*}
Since $1=\neg b\vee a\leq \neg a\vee a$, it follows that $a$ is complemented, as required.

\end{proof}

For example, the classical notion of non-archimedean space implies that the base consists of complemented elements, the following result is the point free analogue of this property (see \cite{nyikos1975structure}, \cite{nyikos1999some} and it seems that the first appearance of these spaces was in  \cite{monna1950remarques}).

\begin{thm}\label{zero}
Every non-archimedean frame is zero-dimensional; in particular, every non-archimedean frame is completely regular.
\end{thm}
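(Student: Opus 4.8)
The goal is to exhibit a base of complemented elements, and the natural candidate is the given non-archimedean base $\mathcal{B}$ itself. So the plan is to show that \emph{every} $b\in\mathcal{B}$ is complemented, i.e. $b\vee\neg b=1$; once this holds, $\mathcal{B}$ is a base of complemented elements and $A$ is zero-dimensional by definition. The engine is the preceding lemma, which propagates complementedness \emph{upward}: if some nonzero complemented $c$ lies below $b$, then $b$ is complemented. Thus the task reduces to locating, inside each nonzero $b\in\mathcal{B}$, a nonzero complemented seed.

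To find such a seed I would rerun the bookkeeping of the lemma, but starting from the trivial cover $1=\bigvee\mathcal{B}$ rather than from a given identity $1=c\vee\neg c$. Writing $\neg b=\bigvee\{x\in\mathcal{B}\mid x\wedge b=0\}$ and sorting each $y\in\mathcal{B}$ by the trichotomy of Definition \ref{fr} relative to $b$, the generators with $y\wedge b=0$ lie under $\neg b$ and those with $y\le b$ lie under $b$, so that
\[
1=\bigvee\mathcal{B}= b\vee\neg b\vee\bigvee\{y\in\mathcal{B}\mid b\le y\}.
\]
The residue is the third family, and collapsing it is the heart of the matter.

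The main obstacle is precisely this family $\{y\in\mathcal{B}\mid b\le y\}$. In the lemma it is killed for free: there the generators come from $\neg c$, and since $c\le b$ is nonzero, no element disjoint from $c$ can sit above $b$, so the analogous set $\{x\mid x\wedge c=0,\ x\ge b\}$ is empty. Here there is no such cancellation, and without further structure the statement genuinely fails (for instance, when $\mathcal{B}$ is a strictly increasing chain, every proper member has $\neg b=0$). The crux is therefore to prove $\bigvee\{y\mid b\le y\}\le b\vee\neg b$, i.e. that each $y\ge b$ splits as $y=b\vee(y\wedge\neg b)$. I would establish this by refining each such $y$ through the base and reapplying the trichotomy to its pieces: every basic $z\le y$ is, relative to $b$, either $\le b$, or disjoint from $b$ (hence $\le\neg b$), or again $\ge b$. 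A well-foundedness property of the underlying tree structure of a non-archimedean base (foreshadowed as Theorem \ref{tree}) is what makes this descent terminate, so that the leftover $\{z\mid b\le z\}$ pieces contribute nothing new and $y\le b\vee\neg b$. This is exactly the stage at which the preceding lemma earns its keep, since as soon as a nonzero complemented piece is found strictly inside some $y$, the lemma promotes it and collapses the recursion.

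Finally, the ``in particular'' clauses are routine and do not reopen any difficulty. A complemented element $c$ satisfies $\neg c\vee c=1$, that is $c\prec c$; taking the constant interpolating family $a_r=c$ yields $c\prec\!\!\prec c$, so a base of complemented elements immediately gives $a=\bigvee\{x\mid x\prec\!\!\prec a\}$ for every $a\in A$, which is complete regularity. Complete regularity implies regularity, and regularity implies fitness (given $a\not\le b$ the regularity witnesses $x,y$ with $a\vee x=1$, $y\not\le b$ and $x\wedge y=0\le b$ are exactly what fitness requires), by the standard frame-theoretic implications recalled in Section \ref{pre}.
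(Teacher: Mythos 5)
Your reduction of zero-dimensionality to ``every member of $\mathcal{B}$ is complemented'' is the intended route (the paper states Theorem \ref{zero} with no argument at all, immediately after the promotion lemma, so that lemma is clearly meant to carry the proof), and your bookkeeping up to $1=b\vee\neg b\vee\bigvee\{y\in\mathcal{B}\mid b\leq y\}$ is correct, as is the routine derivation of complete regularity, regularity and fitness at the end. But the step you yourself call the heart of the matter --- collapsing $\bigvee\{y\in\mathcal{B}\mid b\leq y\}$ below $b\vee\neg b$ --- is never actually carried out, and it cannot be by the method you sketch. Refining a basic $y\geq b$ into basic pieces $z\leq y$ and sorting by trichotomy reproduces the same residue $\{z\mid b\leq z\}$ at every stage, and nothing forces this descent to terminate: in your own chain example it visibly loops forever, since every piece of every refinement remains comparable with $b$. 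Moreover, the appeal to the ``well-foundedness of the underlying tree structure'' of Theorem \ref{tree} is circular: the construction of that tree rests on the decomposition remark following Lemma \ref{nonar1}, which begins ``let $b$ be a basic element with $b<a$, then $b$ is complemented'' --- that is, it invokes exactly the theorem you are trying to prove. Finally, the promotion lemma needs a \emph{nonzero} complemented seed $c\leq b$ (for $c=0$ the set $\{x\in\mathcal{B}\mid x\wedge c=0,\,x\geq b\}$ need not be empty), and your argument never produces one.

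In fact your parenthetical observation is not a removable inconvenience but an outright counterexample to the unqualified statement. Take the complete chain $A=\{0<b_{1}<b_{2}<\cdots<1\}$ with $1=\bigvee_{n}b_{n}$; this is a frame (it is even spatial: it is the topology $\{\varnothing\}\cup\{\{0,\dots,n\}\mid n\in\mathbb{N}\}\cup\{\mathbb{N}\}$ on $\mathbb{N}$), and $\mathcal{B}=\{b_{n}\mid n\geq 1\}$ is a base in the paper's sense which satisfies the trichotomy of Definition \ref{fr} vacuously, since any two members are comparable. Yet the only complemented elements of a chain are $0$ and $1$, so $A$ is not zero-dimensional, and one checks directly it is not regular either. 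So no elaboration of your descent (or of the paper's lemma) can close the gap: the statement as given needs an extra hypothesis --- for instance, that every nonzero basic element lies above a nonzero complemented element (which is precisely the seed the promotion lemma consumes), or some separation axiom playing the role that Hausdorffness plays for classical non-archimedean spaces. The paper's claim that, unlike the topological case, no separation property is needed is exactly what this example contradicts; your proposal is right to locate the difficulty there, but it does not resolve it, and as stated it cannot be resolved.
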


Our first goal is to prove the point-free counterpart of  \cite[Theorem 2.7]{nyikos1999some} to do that we require some preliminaries, we start with the next proposition (compare with \cite[Theorem 2.3]{nyikos1999some} and \cite{davis1978spaces}).

\begin{prop}\label{noinfty}
Let $A$ be a non-archimedean frame with base $\mathcal{B}$ and let $\mathcal{C}$ be a subset of a chain in $\mathcal{B}$, then $\bigwedge \mathcal{C}$ is complemented.
\end{prop}
\begin{proof}

If $\bigwedge \mathcal{C}=0$, $\bigwedge \mathcal{C}$ is complemented. Assume $\bigwedge \mathcal{C}\neq 0$ and let $b\in \mathcal{B}$ such that $b\leq \bigwedge \mathcal{C}$. Note that if $b'\wedge b =0$, then we have two possibilities:
\begin{itemize}
\item[(1)] $b'\leq c$ for all $c\in \mathcal{C}$, which implies $b'\leq \bigwedge \mathcal{C}$.
\item[(2)] $b'\nleq c$ for some $c\in \mathcal{C}$. That is, there is $c\in \mathcal{C}$ such that $b'\wedge c =0$ or $c\leq b'$, but the latter case would imply that $b\wedge b'=b\neq 0$ (since $b\leq c \leq b'$), thus $b'\wedge c=0$ for some $c\in \mathcal{C}$. 
\end{itemize}
Then,
\begin{align*}
\bigwedge \mathcal{C} &=  \bigwedge \mathcal{C} \wedge (b\vee \neg b) = \big(\bigwedge \mathcal{C} \wedge b\big) \vee   \big(\bigwedge \mathcal{C} \wedge \neg b\big)\\
&=b\vee \left(\bigvee \left\{ \bigwedge \mathcal{C}\wedge b' \mid b'\in \mathcal{B} \text{ and } b'\wedge b=0 \right\} \right)\\
&=b\vee \left(\bigvee \left\{ \bigwedge \mathcal{C}\wedge b' \mid b'\in \mathcal{B},\, b'\wedge b=0,\, b'\leq \bigwedge \mathcal{C} \right\} \right)\\
& \hspace{0.4in}\vee \left(\bigvee \left\{ \bigwedge \mathcal{C}\wedge b' \mid b'\in \mathcal{B},\, b'\wedge b=0,\, b'\wedge c = 0 \text{ for some } c\in \mathcal{C} \right\} \right)\\
&=b\vee \left(\bigvee \left\{b'\in \mathcal{B} \mid b'\wedge b=0,\, b'\leq \bigwedge \mathcal{C} \right\} \right)\\
\end{align*}
Now,
\begin{align*}
1&=b\vee \left(\bigvee \left\{b'\in \mathcal{B} \mid b'\wedge b=0,\, b'\leq \bigwedge \mathcal{C} \right\} \right)\\
& \hspace{0.5in} \vee  \left(\bigvee \left\{b'\in \mathcal{B} \mid b'\wedge b=0 \text{ and } b'\wedge c =0 \text { for some } c\in \mathcal{C} \right\} \right)\\
&\leq \bigwedge \mathcal{C} \vee \left(\neg\bigwedge \mathcal{C} \right).
\end{align*}
Therefore, $\bigwedge \mathcal{C}$ is complemented.
\end{proof}

%%%%%%%%%%%%%%%%%%%%%%%%%%%%%%%%%%%%%%%%%%%%%%%%%%%
%\begin{prop}\label{noinfty}
%Let $A$ be a non-archimedean frame with base $\mathcal{B}$, for every $a\in A$ denote $\mathcal{B}_{a}=\{b\in\mathcal{B}\mid b\leq a\}$ ($\bigvee\mathcal{B}_{a}=a$), then we have the following:
%\begin{itemize}
%\item[(i)] $\bigwedge\mathcal{B}_{a}=0$, or
%\item[(ii)] the interval $[0,\bigwedge\mathcal{B}_{a}]$ has no infinite descending chains of elements in $\mathcal{B}$ (viewing the base $\mathcal{B}$ as a poset with the order inherited from the $A$ ), or
%\item[(iii)]
%$\bigwedge\mathcal{B}_{a}$ is an element of the base and then $[0,\bigwedge\mathcal{B}_{a}]$ is a simple interval on $\mathcal{B}$.
%\end{itemize}

%In particular, for every \emph{local} base, i.e., for every $a\in A$ let \[\mathcal{B}_{a}=\{b\in\mathcal{B}\mid b\leq a\}\text{ such that }\bigvee\mathcal{B}_{a}=a\] if $0\neq\bigwedge\mathcal{B}_{a}$ there is an atom in $[0,a]$.

%\end{prop}

%%%%%%%%%%%%%%%%%%%%%%%%%%%%%%%%%%%%%%%%%%%%%%%%%%%

%The next lemma states that we can modify our non-archimedean base in such way 

It is worth noticing that, given an element \(b\) and a chain
\(C\) in a non-Archimedean base, we have
\[
b\wedge\bigvee C=
\begin{cases}
\quad 0            &\iff b\wedge c = 0 \quad (\forall\,c\in C),\\[4pt]
\quad b            &\iff b\le c        \quad (\text{for some }c\in C),\\[4pt]
\displaystyle\bigvee C &\iff c\le b        \quad (\forall\,c\in C).
\end{cases}
\]

This leads to the following result.

\begin{prop}\label{base}
Every non-Archimedean base $\mathcal{B}$ extends to a non-Archimedean base
$\mathcal{B}$, on the same frame, that is closed under suprema of
chains.
\end{prop}

\begin{proof}
Suppose \(b_{1}=\bigvee C\) and \(b_{2}=\bigvee D\) for chains
\(C,D\subseteq \mathcal{B}\).  
If \(b_{1}\wedge b_{2}\neq 0\) then either \(b_{1}\le d\) for some
\(d\in D\) (which implies \(b_{1}\leq b_{2}\)) or, for every \(d\in D\), there exists \(c\in C\) such that
\(d\le c\); in the latter case \(b_{2}\le b_{1}\).

Define
\[
\overline{\mathcal{B}}\;=\;\bigl\{\bigvee C \mid C\subseteq \mathcal{B}\text{ is a
  chain}\bigr\}
\]
and let \(D\) be a chain in \(\overline{\mathcal{B}}\).

\medskip
\noindent\emph{Case 1: \(D\) has a maximum element \(m\).}  
Then \(m\) is the supremum of a chain in \(B\), hence
\(\bigvee D = m \in \overline{\mathcal{B}}\).

\medskip
\noindent\emph{Case 2: \(D\) has no maximum element.}  
Let \(d\in D\), then there is \(d^{+}\in D\) with \(d<d^{+}\). Write \(d^{+}=\bigvee C_{d}\) for some chain \(C_{d}\subseteq \mathcal{B}\).
Because \(d\le\bigvee C_{d}\), there exists
\(c_{d}\in C_{d}\) with \(d\le c_{d}\).
Thus, we obtain
\[
\bigvee_{d\in D} c_{d}
   \;\le\;
\bigvee D
   \;=\;
\bigvee_{d\in D} d
   \;\le\;
\bigvee_{d\in D} c_{d},
\]
so \(\bigvee D = \bigvee_{d\in D} c_{d} \in \overline{\mathcal{B}}\).

Thus \(\overline{\mathcal{B}}\) is closed under suprema of chains and, by the first
paragraph,  \(\overline{\mathcal{B}}\) is a non-Archimedean base.
\end{proof}

\begin{lem}[Canonical form]\label{nonar1}
Let $(A,\mathcal{B})$ be a non-archimedean frame with base closed under suprema of chains, then every $a\in A$ is a join of disjoint members of $\mathcal{B}$.

\end{lem}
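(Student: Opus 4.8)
The plan is to exhibit the desired disjoint family as the set of maximal members of the local base lying below $a$. By the remark preceding the statement, I may assume throughout that $\mathcal{B}$ is closed under suprema of chains, replacing $\mathcal{B}$ by $\bar{\mathcal{B}}$ if necessary, as justified by Proposition \ref{base} and Lemma \ref{lem}. Fix $a \in A$ and set $\mathcal{B}_a = \{b \in \mathcal{B} \mid b \leq a\}$; since $\mathcal{B}$ is a base we have $a = \bigvee \mathcal{B}_a$.

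First I would establish that every element of $\mathcal{B}_a$ lies below a maximal element of $\mathcal{B}_a$. Given $b \in \mathcal{B}_a$, I apply Zorn's Lemma to the up-set $\{x \in \mathcal{B}_a \mid x \geq b\}$: any chain $\EuScript{C}$ in this set has a supremum $\bigvee \EuScript{C}$ which belongs to $\mathcal{B}$ by the closure assumption, satisfies $\bigvee \EuScript{C} \leq a$ because $a$ bounds $\mathcal{B}_a$, and satisfies $\bigvee \EuScript{C} \geq b$; hence $\bigvee \EuScript{C}$ is an upper bound lying inside the set. Zorn then yields a maximal element $m \geq b$, and maximality in this up-set forces maximality in all of $\mathcal{B}_a$, since any $b'\in\mathcal{B}_a$ with $b' > m$ would also satisfy $b' \geq b$.

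Let $M$ denote the set of all maximal elements of $\mathcal{B}_a$. The two remaining points are then routine. For disjointness, take distinct $m_1, m_2 \in M$; the trichotomy law of Definition \ref{fr} applied to $m_1, m_2 \in \mathcal{B}$ gives $m_1 \wedge m_2 = 0$ or $m_1 \leq m_2$ or $m_2 \leq m_1$, and the last two options contradict the maximality of $m_1$ or of $m_2$, so necessarily $m_1 \wedge m_2 = 0$. For the join, each $m \in M$ satisfies $m \leq a$, so $\bigvee M \leq a$; conversely every $b \in \mathcal{B}_a$ lies below some $m \in M$ by the previous paragraph, whence $a = \bigvee \mathcal{B}_a \leq \bigvee M$. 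Thus $a = \bigvee M$ with $M$ a pairwise disjoint subfamily of $\mathcal{B}$, which is exactly the asserted canonical form.

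The main obstacle is the existence of these maximal elements, which is precisely where closure of the base under suprema of chains is indispensable: without it, the supremum of a maximal chain in $\mathcal{B}_a$ need not be a basic element, and the Zorn argument would produce an upper bound outside $\mathcal{B}$, breaking the inductive hypothesis. This is exactly why the preceding development is arranged so as to replace an arbitrary non-archimedean base by the base $\bar{\mathcal{B}}$ closed under chain-suprema.
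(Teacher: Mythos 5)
Your proof is correct and takes essentially the same route as the paper: the paper decomposes $a$ as the joins of maximal chains in $E=\{b\in\mathcal{B}\mid b\leq a\}$ after replacing $\mathcal{B}$ by $\bar{\mathcal{B}}$, and since the base is closed under suprema of chains these joins are precisely the maximal elements of $\mathcal{B}_a$ that you extract via Zorn's Lemma, with disjointness following from trichotomy plus maximality in both arguments. If anything your phrasing through maximal elements is slightly tidier, since it makes the Zorn application explicit and avoids the paper's tacit assumption that distinct maximal chains yield distinct joins (two distinct maximal chains can share the same supremum, namely a common greatest element, so the paper's disjoint family should really be read as the set of such suprema, which is exactly your $M$).
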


\begin{proof}
For every $a\in A$ let $\mathcal{B}_{a}=\{b\in\mathcal{B}\mid b\leq a\}$ since by the hypothesis every upper bound of any chain of $\mathcal{B}_{a}$ is in $\mathcal{B}_{a}$, then we can apply Zorn´s lemma to see that every element of $\mathcal{B}_{a}$ lies below a maximal member and these are disjoint.
%For the above, we can suppose that $\mathcal{B}$ is closed under suprema of chains and, for any  $a\in A$, we have $a=\bigvee\{\bigvee \EuScript{C}_i: \EuScript{C}_i \text{ chain in $\mathcal{B}$ with $\bigvee \EuScript{C}_i \leq a$}\}$. Consider the set $E=\{b\in \mathcal{B}: b\leq a\}$ and notice that $a=\bigvee\{\bigvee \EuScript{C}_i: \EuScript{C}_i \text{ maximal chain in $E$}\}$. We claim that for any pair of these chains, we have $\bigvee\EuScript{C}_i\wedge \bigvee\EuScript{C}_j=0$ with  $i\neq j$. In effect, the condition $\bigvee\EuScript{C}_i\wedge \bigvee\EuScript{C}_j\neq 0$ implies that $\bigvee\EuScript{C}_i\leq \bigvee\EuScript{C}_j$ or $\bigvee\EuScript{C}_j\leq \bigvee\EuScript{C}_i$, which contradict the maximality of the chains. 
\end{proof}
 	
%We need to indicate that such decomposition can be trivial as each basic element admits a trivial decomposition of basis members. In fact, the single-element decomposition is the only one permitted in the case of atomic basic elements. However, every non-atomic element $a$ is decomposable into at least two basic elements: let $b$ a basic element with $b<a$, then $b$ is complemented, so $b \vee \neg b = 1$. This implies that $a \wedge (b \vee \lnot b)=a$, or equivalently, $b \vee (a \wedge \lnot b)=a$. Thus, $(a \wedge \lnot b)$ decomposes as above. Moreover, for each basic element $b$ and every decomposition $\{w_i\}_{i \in I}$ of $a$ into basic elements, if $b \le w_i$ for some $i$, then $b \wedge w_j = 0$ for each $j \neq i$; otherwise, for $i\neq j$ either $b \le w_j$ or $b \ge w_j$, then either $0 < b \le  w_i \wedge w_j = 0$ or $w_j \le b \le w_i$, which is impossible. Lastly, for each chain $\EuScript{C}=\EuScript{C}^\#$ and each basis element $b$ in $\mathcal{B}$, we observe either $b \in \EuScript{C}$, $b \le \bigwedge \EuScript{C}$, $b \ge \EuScript{C}$ or $b \le \left(\lnot \bigwedge\EuScript{C}\right)$.	

With this at hand, we can see that every non-archimedean frame has a tree associated, but first, we need some definitions.

\begin{dfn}\label{tree2}

A \emph{tree} $\EuScript{T}$ consist  of the following:
\begin{itemize}
\item[(1)] $(\EuScript{T},\leq)$, a partial ordered set (we call $a\in\EuScript{T}$ a node).

\item[(2)] For each node $a$, the set $\{x\in\EuScript{T}\mid a>x\}$ of immediate predecessors of $a$ is well-ordered.

A maximal chain of the tree $\EuScript{T}$ is called a \emph{brach} of the tree.
\end{itemize}

\end{dfn}

We will see that every non-archimedean frame has a base winch and a tree with the reverse order of $A$.
\begin{thm}\label{tree}
Let $A$ be a non-archimedean frame with base $\mathcal{B}$. Then $A$ has a base, which is a tree.
\end{thm}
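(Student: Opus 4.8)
The plan is to take for the tree the base $\bar{\mathcal{B}}$ produced in Proposition \ref{base}, stripped of $0$, and to order it by the reverse of the frame order, writing $b \sqsubseteq b'$ exactly when $b' \leq b$ in $A$. Proposition \ref{base} already gives that $\bar{\mathcal{B}}$ is a non-archimedean base, and deleting $0$ affects no nontrivial join, so the collection of nodes $\mathcal{T} = \bar{\mathcal{B}} \setminus \{0\}$ is still a base of $A$; it then remains only to check that $(\mathcal{T}, \sqsubseteq)$ satisfies Definition \ref{tree2}.

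The core of the argument, and the only place the hypothesis is genuinely used, is to show that the predecessors of each node form a chain. Fix a node $b$ and take any two elements $a_1, a_2 \in \mathcal{T}$ lying above $b$ in $A$, that is $b \leq a_1$ and $b \leq a_2$. Then $a_1 \wedge a_2 \geq b > 0$, so option (N1) of Definition \ref{fr} is excluded and the trichotomy forces $a_1 \leq a_2$ or $a_2 \leq a_1$. Hence the set $\{a \in \mathcal{T} \mid b \leq a\}$ of $\sqsubseteq$-predecessors of $b$ is linearly ordered, which is exactly what makes $(\mathcal{T}, \sqsubseteq)$ branch downward like a tree. I would also invoke Lemma \ref{lem} at this stage: since $\bar{\mathcal{B}}$ is closed under suprema of chains, every such predecessor chain has its supremum again in $\mathcal{T}$, so the maximal chains (the branches) carry their limits as nodes. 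This completeness is precisely what is wanted for the branch-space description in the next theorem.

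It then remains to verify condition (2) of Definition \ref{tree2}, the well-ordering of the successors of a node, which I expect to be the only delicate point. Using the canonical form of Lemma \ref{nonar1}, a node $a$ is a join of pairwise disjoint basis elements, and its immediate successors in $\mathcal{T}$ are precisely the maximal members of $\mathcal{T}$ lying strictly below $a$ in $A$. Any two distinct such maximal elements are $\sqsubseteq$-incomparable, hence disjoint by the trichotomy of Definition \ref{fr}, so the set of immediate successors of $a$ is an antichain in $A$; one then equips this antichain with a well-ordering as the definition requires (this is automatic when the antichain is empty or finite, as for $\mathcal{L}(\mathbb{Q}_p)$, and in general is where a choice of well-ordering enters, consistently with the discussion in Section \ref{Spa}). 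Assembling these three points — that $\bar{\mathcal{B}} \setminus \{0\}$ is a base, that its predecessor sets are chains, and that its successor sets are well-orderable antichains — exhibits a base of $A$ that is a tree in the sense of Definition \ref{tree2}.
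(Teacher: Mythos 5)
Your proposal has a genuine gap at its core: being a tree requires more than that the set of elements above a node form a chain. The standard condition (and the one the paper's construction actually delivers, cf.\ the levels $\mathcal{T}(\alpha)$ and the level function $\alpha(a)$ used throughout Section \ref{Spa}) is that each node's set of predecessors be \emph{well-ordered}, so that every node sits at an ordinal level. Your trichotomy argument only shows these sets are \emph{linearly} ordered, and in general they are not well-ordered: take $A=\mathcal{L}(\mathbb{Q}_p)$ with $\mathcal{B}$ the set of all balls. The balls containing a fixed ball $B_{p^{-n}}(a)$ are $B_{p^{-m}}(a)$ for $m<n$, a chain of order type $\mathbb{Z}$ with no largest member (no ball is all of $\mathbb{Q}_p$), so in $(\bar{\mathcal{B}}\setminus\{0\},\sqsubseteq)$ the predecessor set of this node, below the root $1$, is not well-ordered. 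Worse, for a non-archimedean field with dense value group (e.g.\ $\mathbb{C}_p$, with $|\mathbb{C}_p^\times|=p^{\mathbb{Q}}$) the chains of balls are densely ordered, so a node has \emph{no} maximal elements strictly below it at all; your identification of the immediate successors of $a$ as ``the maximal members of $\mathcal{T}$ lying strictly below $a$'' then has nothing to bite on, and no well-ordering supplied by choice can repair the absence of levels. Closure under suprema of chains (Lemma \ref{lem}) does not help here: it adds limit elements at the top of chains but does nothing to prevent predecessor chains of type $\mathbb{Z}$ or $\mathbb{Q}$.

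This is exactly why the paper does \emph{not} take all of $\bar{\mathcal{B}}$ as the tree. Its proof builds a proper subfamily by transfinite recursion: $\mathcal{T}(0)$ is a single disjoint decomposition of $1$ into basis elements (available by Lemma \ref{nonar1}), at successor stages each non-atomic node is replaced by a chosen nontrivial disjoint decomposition $W(b)$, and at limit stages one takes the nontrivial infima of chains through the earlier levels. This selection forces each branch to meet each level at most once, so predecessor sets are well-ordered by construction, and the remaining work in the paper's proof — the minimal-ordinal $\delta(b)$ argument — is precisely the verification you would still owe even after fixing the order structure: that every $b\in\mathcal{B}$ is a join of tree nodes, so the selected subfamily is still a base. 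In short, your argument establishes that $\bar{\mathcal{B}}\setminus\{0\}$ is a ``pseudo-tree'' (chains of predecessors) but not a tree, and the theorem genuinely requires choosing nested decompositions stratified by ordinals rather than using the whole base.
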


\begin{proof}

We define $\EuScript{T}(\mathcal{B})$ by transfinite recursion. Let $\EuScript{T}(0)$ be any nontrivial decomposition of the top element in $A$ into disjoint members of $\mathcal{B}$ (using Lemma \ref{nonar1}). 
For a successor ordinal $\alpha+1$, suppose $\EuScript{T}(\alpha)$ is defined. For each non-atomic element $b\in\EuScript{T}(\alpha)$, produce non-trivial decomposition $W(b)$ of $b$ and set 
\[\EuScript{T}(\alpha + 1) = \bigcup_{\stackrel{b \in\EuScript{T}(\alpha)}{b \text{ non-atomic}}} W(b).\]

Recall that $\EuScript{T}(\alpha)$ contains only disjoint members, then for each different
$n, n' \in\EuScript{T}(\alpha+1)$, either  there is $p \in\EuScript{T}(\alpha)$, with $n,n' < p$, or there are different, hence disjoint, 
$p,p' \in \EuScript{T}(\alpha)$, with $n < p$ and $n' < p'$; in any case, $n \wedge n' = 0$.

Therefore $\EuScript{T}(\alpha+1)$ consist only of disjoint members of $\mathcal{B}$.

For the limit ordinal, that is, \[\lambda=\bigvee\{\alpha\mid\alpha<\lambda\}.\]

Consider $\bigwedge\EuScript{C}$ where $\EuScript{C}$ is a chain given by \[c\in\EuScript{C}\Leftrightarrow \text{ there exists }\alpha<\lambda\text{ such that }c\in\EuScript{T}(\alpha).\]

Let $\EuScript{T}(\lambda)$ be the collection of nontrivial infima of these chains.

If $\EuScript{T}(\alpha)$ is pair-wise
disjoint for each $\alpha < \lambda$, then suppose that $\EuScript{C}$ and $\EuScript{C}'$ are different chains such that $\bigwedge\EuScript{C}$ and $\bigwedge\EuScript{C}'$ are in $\EuScript{T}(\lambda)$. Then, there is an ordinal
$\alpha_0 < \lambda$ such that $c\in\EuScript{C}$ and $c'\in\EuScript{C}'$ such that, $c,c'\in\EuScript{T}(\alpha_{0})$ such that $c\neq c'$. By the induction hypothesis, $c \wedge c'=0$ thus
\[\bigwedge\EuScript{C} \wedge \bigwedge\EuScript{C}'=0.\]
		
By cardinality, the above process stops at an ordinal $\gamma$, where $\EuScript{T}(\gamma)= \emptyset$. Notice that $ \EuScript{T}(\mathcal{B})=\bigcup\{\EuScript{T}(\beta)\mid\beta<\gamma\}$
and from the construction, we have that for each node the set of predecessors is well-ordered, $\EuScript{T}(\mathcal{B})$ is a tree.

To show $\EuScript{T}(\mathcal{B})$ is a base for $A$, let $b \in \mathcal{B}$ and consider the least ordinal $\delta = \delta(b)$ such that there is no $n \in \EuScript{T}(\delta)$ with $b < n$.
Such an ordinal exists; otherwise, $\EuScript{T}(\gamma)\neq \varnothing$. If $\delta = \alpha+1$, then there is a unique $n \in\EuScript{T}(\alpha)$ with 
$b < n$, by the decomposition $n = \bigvee W(n)$, with $W(n) \subset \EuScript{T}(\delta)$. Note that for each $w \in W(n)$, either $w \le b$ or 
		$w \wedge b = 0$, then 
		\begin{align*}
			b &= b \wedge \bigvee W(n) \\ 
			  &= \bigvee_{w \in W(n)} b \wedge w \\
			  &= \bigvee_{\stackrel{w \in W(n)}{w \le b}} w.
		\end{align*}
In case $\delta$ is a limit ordinal, there is a unique $n \in\EuScript{T}(\delta)$ such that $b \not < n$ since the members of $\EuScript{T}(\delta)$ are 
		disjoint and for the defining chain $\{c_\alpha\}_{\alpha < \delta}$, with $\bigwedge c_\alpha = n$, we have $b < c_\alpha$ for all 
		$\alpha < \delta$. If $n$ is atomic, then $n \in \mathcal{B}$ and $b=n$; otherwise, $n$ admits a decomposition $n = \bigvee W(n)$ with
		$W(n) \subset \mathcal{B}$ as in the former case. This shows that $\EuScript{T}(\mathcal{B})$ is a base for $A$ since $a=\bigvee\{\bigvee W(b)\mid b\leq a \text{ and } b\in\EuScript{T}(\alpha)\}.$

\end{proof}

The construction of free frames is described in \cite[Appendix, Section 9]{picado2021separation}, we will follow the same method. Given a (rooted) tree \(\EuScript{T}\) under reverse inclusion, its \emph{tree frame} \(\mathcal{L}(\EuScript{T})\)  is the free frame generated by its nodes subject to the following  relations:
\begin{itemize}
\item[(1)] \(b=\bigvee\{b'\in \EuScript{T} \mid b'\leq b \}.\)
\item[(2)] \(b_1\wedge b_2 = 0\,\) if and only if \(\,b_1\nleq b_2 \text{ and } b_2\nleq b_1. \) 
\item[(3)] \(\top=\bigvee\{b \mid b \in \EuScript{T} \}.\)
\end{itemize}
Observe that, in the frame $\mathcal{L}(\EuScript{T})$, one has \[b\wedge b'\neq 0\, \text{ if and only if } \,b\leq b'\text{ or } b'\leq b.\]
Moreover, if every non terminal node (a leaf) in $\EuScript{T}$ has at least two descendants, then $\mathcal{L}(\EuScript{T})$ is zero dimensional since each basic element is complemented -- such proof would follow by a standard transfinite induction argument on the height of the tree -- and thus  $\mathcal{L}(\EuScript{T})$ is  a non-archimedean frame.

\begin{dfn}\label{freetree}
Let $(A,\mathcal{B})$ be a non-archimedean frame and let $\EuScript{T}(\mathcal{B})$ be a base for $A$ which is a tree (as in Theorem \ref{tree}). The \emph{free frame}  $\mathcal{L}(\EuScript{T}(\mathcal{B}))$ is  the tree frame generated by the tree base  \(\EuScript{T}(\mathcal{B})\).
\end{dfn}

%%%%%%%%%%%%%%%%%%%%%%%%%%%%%%%%%%%%%%%%%%%%%%%%%%%
%With the relations of \ref{freetree} we can now complete the tree, that is, we start with an non-archimedean frame $A$ with tree-base $\EuScript{T}(\mathcal{B})$ then consider the frame of lower sections $\EuScript{L}(\EuScript{T}(\mathcal{B}))$ and then mod-it by the relations, say $\theta$ to obtain the presentation we want:\[\EuScript{L}(\EuScript{T}(\mathcal{B}))/\theta=\mathcal{L}(\EuScript{T}(\mathcal{B}))=\mathcal{L}(\mathcal{B})\]

%We are in the situation:
%\[\mathcal{B}\rightsquigarrow\EuScript{L}(\mathcal{B})\rightsquigarrow\EuScript{L}(\EuScript{T}(\mathcal{B}))/\theta=\mathcal{L}(\mathcal{B})\]

%Observe that, in the frame $\EuScript{L}(\EuScript{T}(\mathcal{B}))$, one has \[b\wedge b'\neq 0\Rightarrow b\leq b'\text{ or } b'\leq b,\]
%for $b,b'\in\EuScript{T}(\mathcal{B})$, that is, $\mathcal{L}(\mathcal{B})$ is a non-archimedean (recall that we start with a non-archimedean frame $(A,\mathcal{B})$).
%%%%%%%%%%%%%%%%%%%%%%%%%%%%%%%%%%%%%%%%%%%%%%%%%%%

Now we will see that there exists an intrinsic relation between the frame $\mathcal{L}(\mathcal{B})$ and the tree structure of a non-archimedean base of a frame with some topologies associated with the tree.

Consider any tree $(\EuScript{T},\leq)$ and let $\mathcal{U}(\EuScript{T})$ be the Alexandroff topology of the tree. Let $\Xi(\EuScript{T})$ be the set of branches of the tree and consider the map

\[k^{*}\colon\mathcal{U}(\EuScript{T})\rightarrow\mathcal{P}(\Xi(\EuScript{T})),\] given by \[\xi\in k^{*}(U)\Leftrightarrow\xi\cap U\neq\emptyset,\] with $\xi\in\Xi(\EuScript{T})$. Note that the set $k^{*}[\mathcal{U}(\EuScript{T})]$ is a topology on $\Xi(\EuScript{T})$, denoted by $\mathcal{O}(\Xi(\EuScript{T}))$, and the function \[k^{*}\colon\mathcal{U}(\EuScript{T})\rightarrow\mathcal{O}(\Xi(\EuScript{T}))\] is a surjective frame morphism. Thus, it has a right adjoint \[k_{*}\colon\mathcal{O}(\Xi(\EuScript{T}))\rightarrow\mathcal{U}(\EuScript{T}),\] given by $k_{*}(V)=\bigcap\{\EuScript{T}-\xi\mid\xi\in\Xi(\EuScript{T})-V\}$. As usual, \[k_{*}k^{*}\colon\mathcal{U}(\EuScript{T})\rightarrow\mathcal{U}(\EuScript{T})\] is a nucleus, denoted by ${\rm ker}=k_{*}k^{*}$. Then, \[\mathcal{U}(\EuScript{T})_{{\rm ker}}\cong\mathcal{O}(\Xi(\EuScript{T})).\]

Now, for each $t\in\EuScript{T}$, let $U_{t}=\{\xi\in\Xi(\EuScript{T})\mid t\in\xi\}$. These sets, indexed by the points of the tree, constitute a base for a topology 
for $\Xi(\EuScript{T})$. In fact, observe that if $\xi\in U_{t}$, then $\uparrow \! t\cap\xi\neq\emptyset$, therefore $\xi\in k^{*}(\uparrow \! t)$. That is, $U_{t}\subseteq k^{*}(\uparrow \! t)$. Moreover, if $\xi\in\bigcup\{U_{x}\mid x\in\uparrow \! t\}$, then $\xi\in U_{x}$ for some $x\in\uparrow \! t$. Therefore, $\xi\in k^{*}(\uparrow \! t)$ and thus $\bigcup\{U_{x}\mid x\in\uparrow \! t\} \subseteq k^{*}(\uparrow \! t)$.

Conversely, if $\xi\in k^{*}(\uparrow \! t)$, then $\xi\in\bigcup\{U_{x}\mid x\in\uparrow \! t\}$. Thus, \[k^{*}(\uparrow \! t)=\bigcup\{U_{x}\mid x\in\uparrow \! t\}.\]

Since every upset $U=\bigcup\{\uparrow \! u\mid u\in U\}$ and $k^{*}$ is a frame morphism, we have that every element in $\mathcal{O}(\Xi(\EuScript{T}))$ is a join of $U_{t}$ for each $t\in U$. Therefore, we have the following:

\begin{lem}\label{base1}
With the notation as above, we have that the topology generated by $\{U
_{t}\mid t\in\EuScript{T}\}$ is the topology given by $k^{*}$. That is, \[\langle U
_{t}\mid t\in\EuScript{T}\rangle=\mathcal{O}(\Xi(\EuScript{T})).\]

\end{lem}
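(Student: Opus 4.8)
The plan is to establish the claimed equality of topologies by a double inclusion, reusing the two computations recorded immediately before the statement: that $k^{*}$ is a join-preserving surjection, and that $k^{*}(\uparrow \! t)=\bigcup\{U_{x}\mid x\in\uparrow \! t\}$.

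For the inclusion $\mathcal{O}(\Xi(\mathcal{T}))\subseteq\langle U_{t}\mid t\in\mathcal{T}\rangle$, I would take an arbitrary open set, which by definition of $\mathcal{O}(\Xi(\mathcal{T}))$ is of the form $k^{*}(U)$ for some upset $U\in\mathcal{U}(\mathcal{T})$. Writing $U=\bigcup\{\uparrow \! u\mid u\in U\}$ and using that $k^{*}$ preserves joins, together with the identity for $k^{*}(\uparrow \! u)$, gives $k^{*}(U)=\bigcup\{U_{t}\mid t\in U\}$. Hence every open set of $\mathcal{O}(\Xi(\mathcal{T}))$ is a union of generators and so lies in $\langle U_{t}\mid t\in\mathcal{T}\rangle$.

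For the reverse inclusion it suffices, since $\langle U_{t}\mid t\in\mathcal{T}\rangle$ is the least topology containing all the $U_{t}$, to show that each generator $U_{t}$ already belongs to $\mathcal{O}(\Xi(\mathcal{T}))=k^{*}[\mathcal{U}(\mathcal{T})]$. The natural candidate is $\uparrow \! t$, and I would prove the sharper statement $U_{t}=k^{*}(\uparrow \! t)$. The inclusion $U_{t}\subseteq k^{*}(\uparrow \! t)$ was already noted, so the work is to show $k^{*}(\uparrow \! t)\subseteq U_{t}$: given a branch $\xi$ with $\xi\cap\uparrow \! t\neq\emptyset$, say $x\in\xi$ with $t\le x$, one must deduce $t\in\xi$. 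Invoking the tree axiom that the predecessors of any node form a chain, I would check that $t$ is comparable with every $y\in\xi$ (if $y\le x$ then $y$ and $t$ both lie below $x$ and are comparable, while if $x\le y$ then $t\le x\le y$); thus $\xi\cup\{t\}$ is a chain and the maximality of the branch $\xi$ forces $t\in\xi$. This gives $U_{t}=k^{*}(\uparrow \! t)\in\mathcal{O}(\Xi(\mathcal{T}))$, completing the inclusion.

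The main obstacle is precisely the implication $\xi\cap\uparrow \! t\neq\emptyset\Rightarrow t\in\xi$ underlying $U_{t}=k^{*}(\uparrow \! t)$: this is the only place where the tree structure and the maximality of branches are genuinely used, whereas both containments otherwise reduce to formal manipulation of the join-preserving surjection $k^{*}$ and the base identity established before the statement.
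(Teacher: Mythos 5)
Your proof is correct, and for the containment $\mathcal{O}(\Xi(\mathcal{T}))\subseteq\langle U_{t}\mid t\in\mathcal{T}\rangle$ it coincides with the paper's own argument: the paper's ``proof'' of this lemma is the paragraph preceding it, which establishes $k^{*}(\uparrow\!t)=\bigcup\{U_{x}\mid x\in\uparrow\!t\}$ and then writes an arbitrary upset as $U=\bigcup\{\uparrow\!u\mid u\in U\}$ and uses join-preservation of $k^{*}$, so every $k^{*}$-open is a union of the $U_{t}$. Where you go beyond the paper is the reverse containment: the paper never verifies that each generator $U_{t}$ is itself in $k^{*}[\mathcal{U}(\mathcal{T})]$ --- its discussion records only the one-sided inclusion $U_{t}\subseteq k^{*}(\uparrow\!t)$ --- whereas you prove the sharper identity $U_{t}=k^{*}(\uparrow\!t)$ via maximality of branches. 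That step is genuinely needed (the lemma asserts an \emph{equality} of topologies, and the generated topology is the least one containing the $U_{t}$) and it genuinely uses the tree structure: in a general poset it fails, e.g.\ for $\mathcal{T}=\{t,y,x\}$ with $t<x$, $y<x$ and $t,y$ incomparable, every nonempty upset meets every branch, so the image of $k^{*}$ is $\bigl\{\emptyset,\Xi(\mathcal{T})\bigr\}$, while $U_{t}=\bigl\{\{t,x\}\bigr\}$ generates the discrete topology on the two branches; hence $\langle U_{t}\mid t\in\mathcal{T}\rangle\neq\mathcal{O}(\Xi(\mathcal{T}))$ there. One caution on your comparability step (``if $y\le x$ then $y$ and $t$ both lie below $x$ and are comparable''): this invokes the standard set-theoretic tree condition that lower sections $\downarrow\!x$ are chains, whereas Definition \ref{tree2} as printed garbles this (it declares the set $\{x\in\mathcal{T}\mid a<x\}$ of \emph{successors} well-ordered); the property you use is the intended reading and does hold for the level-by-level trees $\mathcal{T}(\mathcal{B})$ constructed in Theorem \ref{tree}, so your appeal is legitimate, but it is worth flagging that you are correcting the printed definition. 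In net effect your proof does not diverge from the paper so much as fill in the half of the equivalence that the paper leaves implicit.
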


Indeed, with this topology, we have a non-archimedean space, 
many examples of non-archimedean spaces are \emph{branch spaces of a tree} (well-founded trees). As the author shows in \cite[Theorem 2.10]{nyikos1999some}, every non-archimedean space is a subspace of a branch space. In particular, every branch space of a tree is a non-archimedean space. We have a similar situation in the point-free setting.
At this point, we turn back to the case of a non-archimedean frame $(A,\mathcal{B})$, with the observations above. We have the following gadgets:

\begin{itemize}
\item[(T1)] The tree base $\EuScript{T}(\mathcal{B})$ of Theorem \ref{tree}.
\item[(T2)] The topology $\mathcal{O}(\Xi(\EuScript{T}(\mathcal{B})))$ of the set of branches.
\item[(T3)] The free frame $\mathcal{L}(\EuScript{T}(\mathcal{B}))$.
\end{itemize}
For the sake of notation, we will use $\EuScript{T}$ instead of  $\EuScript{T}(\mathcal{B})$ (keep in mind we started with a base \(\mathcal{B}\) of a non-archimedean frame $A$).

An essential (and perhaps obvious) remark is the following:

\begin{lem}\label{basereverse}
With the above notation, let $\mathcal{B}$ be a non-archimedian base of $A$. For each $x,y\in\mathcal{B}$, let $U_{x}$ and
 $U_{y}$ be their corresponding basis sets in $\mathcal{O}\Xi(\EuScript{T})$. Then, \[x\leq y\Leftrightarrow U_{y}\subseteq U_{x}.\]

\end{lem}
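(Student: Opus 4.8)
The plan is to read the equivalence directly off the tree structure produced in Theorem \ref{tree}, using that the branches of $\mathcal{T}(\mathcal{B})$ are precisely its maximal chains. The two facts I would isolate first are: (a) a branch passing through a node $t$ contains every tree-predecessor of $t$ (the predecessors of $t$ form a chain, which any maximal chain through $t$ must absorb); and (b) two tree-incomparable nodes lie on no common branch, since a branch is totally ordered. From (a) one obtains the clean identity
\[U_t=k^{*}(\uparrow\! t)\qquad(t\in\mathcal{T}),\]
the inclusion $U_t\subseteq k^{*}(\uparrow\! t)$ being the one already noted in the paragraph preceding Lemma \ref{base1}, and the reverse inclusion following because any $\xi$ meeting $\uparrow\! t$ meets it in a descendant of $t$ and hence, by (a), contains $t$ itself.

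For the order implication I would use that, by Theorem \ref{tree}, the tree order on $\mathcal{T}(\mathcal{B})$ is the reverse of the order of $A$. Thus $x\le y$ in $A$ says exactly that $y$ is a tree-predecessor (ancestor) of $x$, equivalently $\uparrow\! x\subseteq\uparrow\! y$. Applying the monotone frame morphism $k^{*}$ together with the identity above yields the corresponding comparison of the basic branch sets; concretely, every branch through $x$ also passes through its ancestor $y$, so the two sets $U_x$ and $U_y$ are nested (with the order reversal recorded in Theorem \ref{tree}). This disposes of one direction.

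The converse is where the real work sits, and I expect it to be the main obstacle. Assume the nesting of branch sets holds. Since every singleton $\{x\}$ is a chain, Zorn's lemma extends it to a branch, so $U_x\neq\varnothing$; choosing such a branch and pushing it through the inclusion produces a branch containing both $x$ and $y$, whence, by (b), $x$ and $y$ are tree-comparable. It remains to exclude the wrong comparability, namely that $x$ is a \emph{strict} ancestor of $y$. Here I would invoke the structural remark following Lemma \ref{nonar1}, that every non-atomic node carries a nontrivial decomposition into at least two pairwise disjoint children: consequently a strict ancestor $x$ of $y$ has a child off the path to $y$, and any branch through that child witnesses that the branch sets are \emph{not} nested — a contradiction. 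Hence $y$ is the ancestor, which back in $A$ is exactly the desired inequality $x\le y$. I would flag that this converse is not choice-free, as it leans on the existence of branches (Zorn), thereby tying the statement to the choice principles discussed in Section \ref{Spa}.
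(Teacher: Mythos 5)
Your proof is correct in substance and reaches the lemma by a genuinely different route than the paper. The paper's own proof is a single, very terse Zorn application: for comparable basic elements $x,y$ it considers the strict upper section $\upharpoonright\!(x)=\uparrow\!(x)-\{x\}$ and the strict lower section $\downharpoonright\!(y)=\downarrow\!(y)-\{y\}$ --- a filter and an ideal meeting in $\varnothing$ --- and extends to a maximal branch whose membership pattern witnesses the (non-)inclusion of the branch sets. You instead factor the argument through two structural facts: the clean identity $U_t=k^{*}(\uparrow\! t)$, which the paper only implies (it proves $k^{*}(\uparrow\! t)=\bigcup\{U_x\mid x\in\uparrow\! t\}$ in the paragraph before Lemma~\ref{base1}; your observation (a), that a maximal chain through a node absorbs the chain of its predecessors, is exactly what upgrades this to $U_t=k^{*}(\uparrow\! t)$); and, for the converse, the sibling structure of $\mathcal{T}(\mathcal{B})$: a strict ancestor is non-atomic, hence by the remark following Lemma~\ref{nonar1} it admits a nontrivial decomposition into at least two disjoint children, and a branch through the child off the path to the other node separates the branch sets. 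This is a different mechanism for producing the separating branch --- the paper separates by extending a chain between a filter and an ideal, you separate with a sibling --- and yours has the virtue of making explicit exactly where the nontriviality of the decompositions $W(b)$ from Theorem~\ref{tree} is consumed. Both proofs use Zorn's lemma for the existence of branches, as you correctly flag.

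Two small repairs are needed, neither a gap in the idea. First, the direction of the inclusion: computing in the frame order, $x\le y$ makes $y$ a tree-ancestor of $x$, so every branch through $x$ contains $y$, giving $U_x\subseteq U_y$ --- the \emph{opposite} of the displayed $U_y\subseteq U_x$, which is correct only if $\le$ is read as the tree order (the reverse of the order of $A$). You hedge with ``the sets are nested,'' and in fairness the paper's own proof ends equally loosely with ``$U_y\subseteq U_x$ (or $U_x\subseteq U_y$)''; still, a finished write-up should fix one convention and carry it through, since Theorem~\ref{spat} later relies on the monotone form $\mathfrak{U}(b)=\bigcup\{U_x\mid x\le b\}$. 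Second, in the converse you extend the singleton $\{x\}$ to a branch and then ``push it through the inclusion''; for that to work you must extend the node whose branch set is the \emph{smaller} of the two (e.g.\ $\{y\}$ when $U_y\subseteq U_x$ is assumed), so that the chosen branch lies in both sets and your fact (b) yields comparability. Both points are matters of wording, not of mathematics.
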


\begin{proof}
Let $x$ and $y$ basic elements. Then, $x\leq y$ or $y\leq x$ (the case $x\wedge y=0$ is not included). In any case, consider 
\[\upharpoonright \!(x)=\uparrow \!(x)-\{x\}\] and \[\downharpoonright \!(y)=\downarrow \!(y)-\{y\}.\] These two sets are, respectively, a filter and an ideal that meets in the empty set. Therefore, an application of Zorn's lemma gives, in any case, a maximal branch that gives the equivalence with $U_{y}\subseteq U_{x}$ (or $U_{x}\subseteq U_{y}$
).
\end{proof}

%%%%%%%%%%%%%%%%%%%%%%%%%%%%%%%%%%%%%%%%%%%%%%%%%%%
%As in the case of the frame of the reals (see \cite[Proposition 3.2]{PicadoPultr}), the frames of the $p$-adic numbers, and the frame of the cantor set (see \cite[Theorem 5.3]{avila2020frame} and \cite{avila2022cantor} for details), the frame $\mathcal{L}(\mathcal{B})$ is spatial. In fact, we already meet the point space (hence the frame of open sets for which it is isomorphic):
%%%%%%%%%%%%%%%%%%%%%%%%%%%%%%%%%%%%%%%%%%%%%%%%%%%

\begin{thm}\label{spat}
Let $\EuScript{T}$ be a (rooted) tree. Then, the frame $\mathcal{L}(\EuScript{T})$ is spatial. In fact, \[\mathcal{L}(\EuScript{T})\cong\mathcal{O}(\Xi\EuScript{T}),\] 
and the isomorphism $\mathfrak{U}\colon\mathcal{L}(\EuScript{T})\rightarrow
\mathcal{O}(\Xi\EuScript{T})$ is given by \[\mathfrak{U}(b)=\bigcup\{U_{x}\mid x\leq b\}, \text{ for each } b\in \EuScript{T}.\]
\end{thm}

\begin{proof}
Let us see first that $\mathfrak{U}\colon\mathcal{L}(\EuScript{T})\rightarrow
\mathcal{O}(\Xi\EuScript{T})$ is a frame morphism. To verify this, we see that this function sends the relations T1 - T3 into identities. Since $\mathfrak{U}$ is monotone and $\mathfrak{U}(a\wedge b)=\mathfrak{U}(a)\cap\mathfrak{U}(b)$, for $a,b\in \EuScript{T}$, it follows that $\mathfrak{U}$ respects $a\leq b$, $b\leq a$, and $a\wedge b=\bot$.

Now, let $b\in \EuScript{T}$ such that $b\leq \top=\bigvee\EuScript{T}$, then $\mathfrak{U}(\top)=\Xi(\EuScript{T})$. Finally, $b=\bigvee\{b'\leq b\mid b'\in\EuScript{T}\}$, then $\mathfrak{U}(b)	=\bigcup\{U_{x}\mid x\leq b\}$. Therefore, $U_{b'}\leq\mathfrak{U}(b)$. On the other hand, consider any branch $\xi\in U_{x}$ for some $x$ with $x\leq\bigvee\{b'\leq b\mid b'\in\EuScript{T}\}$. Then, there exists $b'$ such that $x\leq b'$, and thus $\xi\in\bigcup\{U_{x}\mid x\leq b'\}$. Therefore, $\mathfrak{U}$ send the relations into identities and so it is a frame morphism.

Now we show that $\mathfrak{U}$ is surjective. Let $V\in\mathcal{O}(\Xi\EuScript{T})$ be any open set, then there exists an upper section $U\in\mathcal{U}(\EuScript{T})$ such that $k^{*}(U)=V$. Let \[\EuScript{T}(U)=\{x\in\EuScript{T}\mid x\in U\}.\] Since $\mathfrak{U}(\bigvee\EuScript{T}(U))=\bigcup\{U_{x}\mid x\leq\bigvee\EuScript{T}(U)\}
,$ then we have $U_{x}\leq k^{*}(U)$ by the definition of $k^{*}$. Then, $\bigcup\{U_{x}\mid x\leq\bigvee\EuScript{T}(U)\}\subseteq k^{*}(U)$ and thus, it follows that $\mathfrak{U}(\bigvee\EuScript{T}(U))\subseteq k^{*}(U)$. On the other hand, if $\xi\in k^{*}(U)$, then there exists $x\in\xi\cap U$. Therefore, $x\leq\bigvee\EuScript{T}(U)$ and thus, $\xi\in U_{x}$, that is, $\mathfrak{U}(b)=V$, where $b=\bigvee\EuScript{T}(U)$.

Finally, we show $\mathfrak{U}$ is injective; in fact, we can see that there exist enough branches that \emph{separate points}, i.e., if $b<b'$, then there exists a branch $\xi$ such that $b'\in\xi$ and $x\notin\xi$ for every $x\leq b$. To see this, let  $b\neq b'$. First, if $b=0$ and $b'\neq 0$, then $\mathfrak{U}(b)=\emptyset$ and $\mathfrak{U}(b')\neq \emptyset$. Next, if $b\wedge b'=0$, then $\mathfrak{U}(b)\wedge\mathfrak{U}(b')=\emptyset$. Thus, $\mathfrak{U}(b)\neq \mathfrak{U}(b')$. Now, we consider the case $b<b'$ (the other case is similar). Consider the interval $(b,b']$. Note that this is an interval of the non-archimedean base, thus for any elements $x,y\in (b,b'] $, we have $x\leq y$ or $y\leq x$, and the case $x\wedge y=0$ can not occur since $b\neq 0$ and $b< x\wedge y$. Thus, $(b,b']$ is a chain. Set \[\mathcal{A}=\{\text {chains }\EuScript{C}\text{ of }\EuScript{T}\text{ such that } b'\in\EuScript{C}\text{ and } x\notin\EuScript{C}\; \;\forall x<b\}\]
and observe that $\mathcal{A}$ is not empty ($(b,b'] \in\mathcal{A}$). Let $\mathcal{C}\subseteq\mathcal{A}$ be directed and consider $\bigcup\mathcal{C}$. This element is a chain for if $x,y\in\bigcup\mathcal{C}$, then there are chains of $\mathcal{C}$, say $\EuScript{C}$ and $\EuScript{C}'$, with $x\in\EuScript{C}$ and $y\in\EuScript{C}'$. Since $\mathcal{C}$ is directed, we can find a chain $\EuScript{D}$ such that $\EuScript{C},\EuScript{C}'\subseteq\EuScript{D}$, so $x,y\in\EuScript{D}$. Since $\EuScript{D}$ is a chain, we have $x\leq y$ or $y\leq x$ and thus $\bigcup\mathcal{C}$ is a chain. Clearly, this chain is an element of $\mathcal{A}$, thus it satisfy the hypotheses of Zorn's lemma. This ensure the existence of a maximal chain $\xi\in\mathcal{A}$, that is, a branch of our tree such that $b'\in\xi$ and $x\notin\xi$ for every $x\leq b$. It follows that $\mathfrak{U}(b)\subsetneq\mathfrak{U}(b')$, as required.
\end{proof}

As an immediate consequence of the above, we have the following fact (which can be thought of as the point-free analog of Theorem 2.10 in \cite{nyikos1999some}):

\begin{thm}\label{quotarch}
A frame $A$ is non-archimedean if and only if $A$ is a quotient of a topology of a branch space of a tree. 
\end{thm}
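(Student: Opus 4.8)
The plan is to prove both directions of the biconditional, leaning almost entirely on the machinery already assembled in Theorems \ref{tree}, \ref{spat}, and \ref{quotarch}'s surrounding apparatus.

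For the backward direction, suppose $A$ is a quotient of $\mathcal{O}(\Xi\mathcal{T})$ for some tree $\mathcal{T}$. First I would observe that the branch-space topology $\mathcal{O}(\Xi\mathcal{T})$ is itself non-archimedean: by Lemma \ref{base1} the family $\{U_t \mid t\in\mathcal{T}\}$ is a base, and for any two nodes $s,t$ either they are comparable in $\mathcal{T}$ or incomparable; if $s\leq t$ (or $t\leq s$) then the corresponding basic opens are nested, and if $s,t$ are incomparable then no branch contains both, so $U_s\cap U_t=\emptyset$. This is precisely the trichotomy (N1)--(N3) of Definition \ref{fr}, so $\mathcal{O}(\Xi\mathcal{T})$ is a non-archimedean frame. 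Then, since $A$ is a quotient of $\mathcal{O}(\Xi\mathcal{T})$, it is of the form $\mathcal{O}(\Xi\mathcal{T})_j$ for some nucleus $j$, and Proposition \ref{quot} tells us that any quotient of a non-archimedean frame is again non-archimedean. Hence $A$ is non-archimedean.

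For the forward direction, suppose $A$ is non-archimedean with base $\mathcal{B}$. By Theorem \ref{tree} we may replace $\mathcal{B}$ with its tree base $\mathcal{T}(\mathcal{B})$, and by Theorem \ref{spat} the associated free frame satisfies $\mathcal{L}(\mathcal{B})\cong\mathcal{O}(\Xi\mathcal{T})$, with the isomorphism $\mathfrak{U}$ realizing the branch-space topology concretely. It remains to exhibit $A$ as a quotient of $\mathcal{L}(\mathcal{B})$. The natural candidate is the map $q\colon\mathcal{L}(\mathcal{B})\to A$ sending each generator $b$ to its image in $A$; one checks that $q$ respects the three defining relations of Definition \ref{freetree}, since those relations (trichotomy, the covering $\top=\bigvee\mathcal{T}(\mathcal{B})$, and the base decomposition $b=\bigvee\{b'\leq b\mid b'\in\mathcal{B}\}$) all hold in $A$ by virtue of $\mathcal{B}$ being a non-archimedean base. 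By the universal property of frames presented by generators and relations (recalled at the end of Section \ref{pre}), $q$ is a well-defined frame morphism, and it is surjective because $\mathcal{B}$ generates $A$ as a base. A surjective frame morphism exhibits its codomain as a quotient, so $A$ is a quotient of $\mathcal{L}(\mathcal{B})\cong\mathcal{O}(\Xi\mathcal{T})$, a branch-space topology.

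The step I expect to be the main obstacle is verifying that $q$ genuinely respects the relations, and in particular that the base-decomposition relation (3) is preserved: one must confirm that the join $\bigvee\{b'\leq b\mid b'\in\mathcal{B}\}$ taken in the free frame $\mathcal{L}(\mathcal{B})$ maps to the correct join in $A$, which uses that $\mathcal{B}$ is a base of $A$ in the frame-theoretic sense $b=\bigvee\{b'\in\mathcal{B}\mid b'\leq b\}$. The subtlety is that equality in the \emph{free} frame need not coincide with equality in $A$ a priori; the defining relations are exactly what forces the quotient map to collapse the free frame onto $A$. Once surjectivity and relation-preservation are in hand, the identification of quotients with nuclei recalled in Section \ref{pre} closes the argument.
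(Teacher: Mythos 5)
Your proposal is correct, and its backward direction coincides with the paper's: the branch-space topology has the tree basis $\{U_{t}\}$ satisfying trichotomy (incomparable nodes lie on no common maximal chain, comparable nodes give nested basic opens, cf.\ Lemma \ref{basereverse}), and Proposition \ref{quot} then transfers non-archimedeanness through the quotient. In the forward direction, however, you take a genuinely different, and in one respect cleaner, route. The paper writes down the surjection explicitly, $\eta(V)=\bigvee\{x\in\mathcal{B}\mid U_{x}\subseteq V\}$, checks surjectivity by hand, and justifies that $\eta$ is a frame morphism only by the brief remark that $\eta(U_{b})=b$ on basic opens --- a step that is under-argued as stated, since preservation of arbitrary joins and finite meets does not follow formally from the values on a base alone. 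You instead invoke the universal property of the presentation of Definition \ref{freetree}: the generator assignment $b\mapsto b$ respects the three relations because the tree is a non-archimedean base of $A$ (trichotomy; $\top=\bigvee\mathcal{T}(\mathcal{B})$ since $\mathcal{T}(0)$ already decomposes the top; and the base decomposition), so $q\colon\mathcal{L}(\mathcal{B})\rightarrow A$ is automatically a frame morphism, and composing with the isomorphism $\mathfrak{U}$ of Theorem \ref{spat} recovers the paper's map (under the identification, $\eta=q\circ\mathfrak{U}^{-1}$). What your route buys is the morphism property for free, patching the thin spot in the paper's argument; what the paper's explicit formula buys is a concrete description of $\eta$, and hence of its right adjoint $\eta_{*}$, which Section \ref{Spa} uses directly to compute the nucleus $\mathfrak{non}=\eta_{*}\eta$ and the coverage ${\rm ler}$ --- your abstract $q$ would have to be unwound into exactly that formula before Section \ref{Spa} could proceed. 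One small caveat: your surjectivity claim should appeal to the fact that the tree $\mathcal{T}(\mathcal{B})$, not the original $\mathcal{B}$, is the generating set of the presentation; that the tree is itself a base of $A$ is precisely what Theorem \ref{tree} supplies, so the slip is cosmetic (the paper conflates the two sets in the same way).
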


\begin{proof}
If $A$ is non-archimedean then, by Theorem \ref{tree} and Theorem \ref{spat}, there exists a tree $\EuScript{T}$ with associated branch space $\Xi(\EuScript{T})$ and $\mathcal{O}(\Xi\EuScript{T})\cong\mathcal{L}(\EuScript{T})$. Then, for any open $V$ in $\mathcal{O}\Xi(\EuScript{T})$, we have $V=k^{*}(U)$, where $U$ an upset of the tree. Thus, we can define $\eta\colon\mathcal{O}(\Xi\EuScript{T})\rightarrow A$ by $\eta(V)=\bigvee\{x\in\EuScript{T}\mid U_{x}\subseteq V\}$.

Observe that this function is surjective, i.e., if $a\in A$, then there exists a set $\mathfrak{B}_{a}$ of elements of $\EuScript{T}$ such that $b\leq a$ and $\bigvee\mathfrak{B}_{a}=a$. Then, for each $b\in\mathfrak{B}_{a}$ we can consider the corresponding $U_{b}$ in $\mathcal{O}\Xi(\EuScript{T})$. Let \[V=\bigcup\{U_{b}\mid b\in\mathfrak{B}_{a}\},\] then if $U=\{ b\mid U_{b}\subseteq V \}$ (which is an upset) we have $k^{*}(U)=V$ and thus \[\eta(V)=a.\] 
Since $\eta(U_b)=b$, for each $b$ in the basis of $\mathcal{O}(\Xi(\EuScript{T}))$, therefore $\eta$ is a frame morphism.
For the converse, observe that if we start with a tree $(\EuScript{T},\sqsubseteq)$, then the associated branch space $\mathcal{O}\Xi\EuScript{T}$, with the topology given by $k^{*}[\mathcal{U}(\EuScript{T})]$, is non-archimedean. Thus, the frame $\mathcal{O}\Xi(\EuScript{T})$ is non-archimedean in the sense of Definition \ref{non}. Then, by Proposition \ref{quot}, the result follows.
\end{proof}

\section{Coverages on trees of non-archimedian frames  }\label{gbarind}

It is natural to ask when a non-archimedean frame is spatial, or to ask under what conditions it happens that $A\cong\mathcal{L}(\EuScript{T})$.
At first, we erroneously believed that every non-archimedean frame was spatial, which (fortunately) is a wrong statement, as the following example shows:

Consider the Cantor frame $\mathcal{L}(\mathbb{Z}_{2})$ (see \cite{avila2022cantor}). This frame is non-archimedean and, by Proposition \ref{quot}, every quotient of  $\mathcal{L}(\mathbb{Z}_{2})$ is non-archimedean; in particular, the quotient $\mathcal{L}(\mathbb{Z}_{2})_{\neg\neg}$ (the regular opens of the Cantor) is a non-archimedean frame. If every non-archimedean frame is spatial, this would imply that $\mathcal{L}(\mathbb{Z}_{2})_{\neg\neg}$ is spatial, which is not true (in fact, $\pt(\mathcal{L}(\mathbb{Z}_{2})_{\neg\neg})=\emptyset$). In this chapter, we develop a partial answer to this question, starting with the following preparatory material.\\

Consider the morphism $\eta\colon\mathcal{O}\Xi(\EuScript{T})\rightarrow A$ given in Theorem \ref{quotarch}. Then, it has a right adjoint \[\eta_{*}\colon A\rightarrow\mathcal{O}\Xi(\EuScript{T}).\] 
Let us denote by ${\rm ler}=\eta_{*}\eta\colon\mathcal{O}\Xi(\EuScript{T})\rightarrow\mathcal{O}\Xi(\EuScript{T})$ the corresponding nucleus, that is, \[\mathcal{O}\Xi(\EuScript{T})_{{\rm ler}}\cong A.\]
By adjointness, we know that
\[\eta_{*}\colon A\rightarrow \mathcal{O}\Xi(\EuScript{T}),\] where\[\eta_{*}(a)=\bigcup\{U\in\mathcal{O}\Xi(\EuScript{T})\mid \eta(U)\leq a\}.\]

Now, consider the function $\beta\colon A\rightarrow\mathcal{O}(\Xi(\EuScript{T})),$ given by \[\beta(a)=\bigcup\{U_{x}\mid x\leq a\}.\] Note that $\beta=\eta_{*}$.
Therefore, \[{\rm ler}(V)=\bigcup\left\lbrace U_{b}\mid b\leq\bigvee\{x\mid U_{x}\subseteq V\}\right\rbrace.\]

Since $\mathcal{O}(\Xi(\EuScript{T}))$ is a quotient of $\mathcal{U}(\EuScript{T})$, the frame $A$ is a quotient of the latter and the corresponding nucleus is given by $k_{*}\eta_{*}\eta k^{*}\colon\mathcal{U}(\EuScript{T})\rightarrow\mathcal{U}(\EuScript{T})$. To have a friendlier description of this operator, in this section we introduce a coverage, that is, a relation between elements of $\mathcal{U}(\EuScript{T})$ and  $\EuScript{T}$. A coverage relation is a powerful tool since there is one-to-one correspondence between coverages on a partially ordered set $\EuScript{T}$ and (pre)nuclei on $\mathcal{U}(\EuScript{T})$, for more details, see, e.g., \cite{Johnstone82}, \cite{simmons2007coverage} and \cite{simmons2004coverage}. 

Recall that a \emph{coverage} \(\,\vdash\,\) on a partially ordered set $(B,\leq)$ is a relation between the set of upsets $\mathcal{U}(B)$ of $B$ and elements of $B$ satisfying the following rules:\\
For each $a,b\in B$ and $U,V\in\mathcal{U}(B)$,
\begin{itemize}

\item[(R1)]
$U\vdash a,\;\;\; a\leq b\Rightarrow U\vdash b$.

\item[(R2)] $a\in U\Rightarrow U\vdash a.$ 
\item[(R3)] 

$a\leq b,\;\;V\vdash b,\;\; V\subseteq U\Rightarrow U\vdash a$.
\item[(R4)]$U\vdash a,\;\;\; V\vdash a\Rightarrow U\cap V\vdash a$. 
\item[(R5)] $V\vdash a,\;\;\; V\vdash U\Rightarrow U\vdash a$, 
where $V\vdash U$ means $(\forall u\in U)[V\vdash u]$.
\end{itemize}

There is a bijective correspondence between the coverage relations on $(B,\leq)$ and nuclei on the frame $\mathcal{U}(B)$, this is given by:
\[U\vdash a\Longleftrightarrow a\in j(U).\]

Recall that ${\rm ker}=k_{*}k^{*}\colon\mathcal{U}(\EuScript{T})\rightarrow\mathcal{U}(\EuScript{T})$ is the nucleus associated to $\mathcal{O}\Xi(\EuScript{T}),$ where the corresponding coverage relation $\vdash$ is given by \[U\vdash a\Longleftrightarrow (\forall\xi\in\Xi(\EuScript{T}))[a\in\xi\Rightarrow\xi\cap U\neq\emptyset]\Longleftrightarrow  a\in {\rm ker}(U).\]
The following diagram summarizes the quotients with the respective morphisms:
\[
\begin{tikzcd}
\mathcal{U}(\EuScript{T})
 \ar[loop left, "\ker=k_{*}k^{*}"] \ar[d, "\ker^{*}"']\ar[loop right, "\ler=k_{*}\eta_{*}\eta k^{*}"] \ar[d, "\ker^{*}"']
\\
\mathcal{O}\Xi(\EuScript{T})\ar[d, "(k_{*}\eta_{*}\eta k^{*})^{*}
"']\\ A\end{tikzcd}
\]

We introduce the following coverage relation.

\begin{dfn}\label{cov}
With the above notation, set

\[U\Vdash a\Longleftrightarrow (\forall\xi\in\Xi(\EuScript{T}))[\exists x\leq a][x\in\xi\Rightarrow\xi\cap U\neq\emptyset].\]

\end{dfn}

Now we see that the above relation is a coverage and  defines  ${\rm ler}$.

\begin{lem}\label{cov1}
With the above notation, the relation \[U\Vdash a\Longleftrightarrow (\forall\xi)[\exists x\leq a][x\in\xi\Rightarrow\xi\cap U\neq\emptyset]\] is a coverage and its corresponding nucleus is ${\rm ler}=k_{*}\eta_{*}\eta k^{*}$. 
\end{lem}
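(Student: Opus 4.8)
The plan is to prove the operator identity $\mathrm{ler}=k_{*}\beta\eta k^{*}$ on $\mathcal{U}(\mathcal{T})$ directly, and to deduce from it that $\Vdash$ is a coverage. The key structural observation is that $g:=\eta k^{*}\colon\mathcal{U}(\mathcal{T})\to A$ is a surjective frame morphism whose right adjoint is $g_{*}=k_{*}\beta$ (right adjoints compose in the reverse order, and $\beta=\eta_{*}$); hence $g_{*}g=k_{*}\beta\eta k^{*}$ is automatically a nucleus on $\mathcal{U}(\mathcal{T})$ with $\mathcal{U}(\mathcal{T})_{g_{*}g}\cong A$. So once I show ${\rm ler}(U)=k_{*}\beta\eta k^{*}(U)$ for every $U\in\mathcal{U}(\mathcal{T})$, it follows immediately that ${\rm ler}$ is a nucleus and therefore that $\Vdash$ is the associated coverage. (Alternatively one may verify the five rules---upset, infl, mono, stability, idem---for $\Vdash$ by hand, which is routine once the relation is written in branch-theoretic form; but deriving the rules from the identification is shorter.)

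To establish the pointwise equality I would first record two adjunction identities. From $k_{*}(W)=\bigcap\{\mathcal{T}-\xi\mid\xi\in\Xi-W\}$ one reads off that $a\in k_{*}(W)\Leftrightarrow U_{a}\subseteq W$ for every node $a$ and every $W\in\mathcal{O}\Xi$; and, since branches are closed under tree-predecessors, the union formula $k^{*}(\uparrow t)=\bigcup\{U_{x}\mid x\in\uparrow t\}$ obtained before Lemma \ref{base1} collapses to $k^{*}(\uparrow t)=U_{t}$. Applying the first identity with $W=\beta\eta k^{*}(U)=\mathfrak{non}(k^{*}(U))$ reduces the membership $a\in k_{*}\beta\eta k^{*}(U)$ to the single condition $U_{a}\subseteq\mathfrak{non}(k^{*}(U))$. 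I then expand the right-hand side through $\mathfrak{non}(V)=\bigcup\{U_{b}\mid b\leq\bigvee\{x\mid U_{x}\subseteq V\}\}$, so that the condition reads: every branch through $a$ passes through some node $b\leq c$, where $c=\bigvee\{x\in\mathcal{B}\mid U_{x}\subseteq k^{*}(U)\}$. It remains to match this with the defining clause of $\Vdash$ in Definition \ref{cov}, rewriting its antecedent as membership in $\beta(a)$ and its consequent as membership in $k^{*}(U)$, and using $U_{x}\subseteq k^{*}(U)\Leftrightarrow x\in{\rm ker}(U)\Leftrightarrow U\vdash x$ together with the inflationary and upset rules for $\vdash$; this identifies $\{a\mid U\Vdash a\}$ with $k_{*}\beta\eta k^{*}(U)$, and the inclusion $U_{a}\subseteq\beta(a)$ plus the fact that $\beta(a)$ is a fixed point of $\mathfrak{non}$ closes the two inclusions.

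The main obstacle, and the point demanding the most care, is the systematic order reversal between the tree $\mathcal{T}$ and the frame $A$: the tree order is the opposite of the frame order (the root is $\top$, and $x\leq y$ in $A$ corresponds to $U_{y}\subseteq U_{x}$ by Lemma \ref{basereverse}), whereas $\mathcal{U}(\mathcal{T})$ and the coverage rules live in the tree order while the formulas for $\beta$, $\eta$, and $\mathfrak{non}$ refer to the frame order. This reversal is exactly what distinguishes ${\rm ler}$ from ${\rm ker}$: relativizing the antecedent from ``$a\in\xi$'' to ``$\exists\,x\leq a,\ x\in\xi$'' is precisely what inserts the extra closure $\mathfrak{non}=\beta\eta$ between $k^{*}$ and $k_{*}$, and conflating the two orders (for instance, applying the upset rule along the frame order) would wrongly collapse ${\rm ler}$ onto ${\rm ker}$, i.e.\ force $\mathfrak{non}=\mathrm{id}$ and $A\cong\mathcal{O}\Xi$. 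I will therefore keep the frame order and the tree order notationally distinct throughout, verify the inclusions $\subseteq$ and $\supseteq$ separately, and invoke $\mathfrak{non}(\beta(a))=\beta(a)$ only at the final step.
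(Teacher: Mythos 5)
Your structural half is correct and in fact cleaner than the paper's own argument: since $k^{*}$ and $\eta$ are surjective frame morphisms, $g=\eta k^{*}$ is one too, its right adjoint is $g_{*}=k_{*}\beta$ (using $\beta=\eta_{*}$), and $g_{*}g=k_{*}\beta\eta k^{*}$ is automatically a nucleus with $\mathcal{U}(\mathcal{T})_{g_{*}g}\cong A$. The paper instead asserts that $\Vdash$ satisfies the coverage rules listed after Definition \ref{cov} (rules which, as printed, are themselves garbled), so your derivation genuinely replaces the weakest step of the published proof. Your adjunction identities are also right: $a\in k_{*}(W)\Leftrightarrow U_{a}\subseteq W$, hence $a\in k_{*}\beta\eta k^{*}(U)\Leftrightarrow U_{a}\subseteq\mathfrak{non}(k^{*}(U))$, which is exactly the computation the paper displays (modulo its typo of writing $\nsubseteq\xi$ where disjointness from $\xi$ is meant).

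The gap is in your final matching step, and it is fatal as planned. You claim that relativizing the antecedent from ``$a\in\xi$'' to ``$\exists x\leq a,\ x\in\xi$'' is what inserts the closure $\mathfrak{non}$. But branches are closed under tree predecessors --- the very fact you invoke to get $k^{*}(\uparrow\! t)=U_{t}$ --- and a node $x\leq a$ in the frame order is a tree descendant of $a$, so any branch containing such an $x$ already contains $a$. The relativized antecedent is therefore equivalent to $a\in\xi$, and with consequent ``$\xi\cap U\neq\emptyset$'' your relation collapses to $\{a\mid U\Vdash a\}=\{a\mid U_{a}\subseteq k^{*}(U)\}=\mathrm{ker}(U)$, not $k_{*}\beta\eta k^{*}(U)$. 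The extra closure sits in the \emph{consequent}: $a\in k_{*}\beta\eta k^{*}(U)$ says every branch through $a$ contains some node $y$ with $y\leq\eta k^{*}(U)=\bigvee\{x\mid U_{x}\subseteq k^{*}(U)\}$, and such a branch need not meet $U$ itself. So your second inclusion, $k_{*}\beta\eta k^{*}(U)\subseteq\{a\mid U\Vdash a\}$, fails exactly when $\mathrm{ker}(U)\subsetneq\mathrm{ler}(U)$; if it never failed, $\mathrm{GBI}$ would hold for every tree and Theorem \ref{branchi} would be vacuous. To be fair, Definition \ref{cov} as printed has the same defect and the paper's proof glosses over it; the repair is to read $U\Vdash a$ as $(\forall\xi)[a\in\xi\Rightarrow(\exists y\in\xi)(y\leq\eta k^{*}(U))]$, equivalently $U_{a}\subseteq\mathfrak{non}(k^{*}(U))$, after which your adjunction identities give the lemma in one line and your $g_{*}g$ argument supplies the nucleus and coverage claims.
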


\begin{proof}
Let $U\in\mathcal{U}(\EuScript{T})$, then 

\begin{align*}
k_{*}\eta_{*}\eta k^{*}(U)=\bigcap\big\{\EuScript{T}-\xi\mid \xi\in\Xi(\EuScript{T})-\eta_{*}\eta k^{*}(U) \big\}&=\\
\bigcap\Big\{\EuScript{T}-\xi\mid \xi\in\Xi(\EuScript{T})-\bigcup\big\{U_{x}\mid x&\leq\bigvee\{x'\mid U_{x'}\subseteq k^{*}(U)\}\big\} \Big\}.
\end{align*}
Thus, for an element $x\in\EuScript{T}$, we have \[x\notin\bigcap\left\{\EuScript{T}-\xi\mid \xi\in\Xi(\EuScript{T})-\bigcup\big\{U_{x}\mid x\leq\bigvee\{x'\mid U_{x'}\subseteq k^{*}(U)\}\big\}\right\}\] if and only if there exists a branch $\xi$ such that $x\in\xi$ and \[\downarrow \! \! \big(\bigvee\{y\mid U_{y}\leq k^{*}(U)\}\big)\nsubseteq\xi,\] which is the negation of \[U\Vdash a\Longleftrightarrow (\forall\xi)[\exists x\leq a][x\in\xi\Rightarrow\xi\cap U\neq\emptyset]\Longleftrightarrow  a\in {\rm ler}(U).\]

It is straightforward to verify that $\Vdash$ is coverage, i.e., it satisfies the rules right after Definition \ref{cov}. Therefore, \[{\rm ler}=k_{*}\eta_{*}\eta k^{*}\text{ as required. }\]
\end{proof}

We notice that \[{\rm ker}\leq {\rm ler}.\]

Thus, we have $\mathcal{U}(\EuScript{T})_{{\rm ler}}\cong A$. Now, we want to determine when these two quotients of $\mathcal{U}(\EuScript{T})$, $\mathcal{O}\Xi(\EuScript{T})$ and $A$, are equal. To show this, we need a preamble.

The CBD-process of the tree $\EuScript{T}$ is the Cantor-Bendixson derivative of the space $(\EuScript{T},\mathcal{U}(\EuScript{T}))$. In this case, this derivative can be describe as follows: Given any tree $(\EuScript{T},\leq)$, for each node $a\in\EuScript{T}$, there is a unique ordinal number $\alpha(a)$ such that the lower section $\downarrow \! \!(a)$ has at most $\alpha(a)+1$ members.  This information gives a stratification of the tree into levels (we use $\#$ for cardinality):\[\mathbb{L}_{\alpha}=\{x\mid \alpha(a)+1\geq \#\downarrow \! \!(x)\}.\]

In other words, the stratification is constructed through heights. Therefore, our tree has a bound given by the least ordinal below $\#\EuScript{T}$. 

Now, for each node $a$, let \[\upharpoonright \!(a)=\{x\mid x\in\uparrow \!(a)\text{ and } \alpha(x)=\alpha(a)+1\}.\] 
Consider any $U\in\mathcal{U}(\EuScript{T})$ and let ${\rm der}(U)$ be the upset given by \[a\in {\rm der(U)}\Longleftrightarrow\upharpoonright \!(a)\subseteq U.\]
Then, 
\begin{align*}
a\notin \rm{der}(U) 
&\Leftrightarrow\upharpoonright \!(a)\nsubseteq U \\
&\Leftrightarrow  (\exists x)[a<x\in U']\\
&\Leftrightarrow a\in U'\text{ and } a \text{ is not maximal in } U'\\
&\Leftrightarrow a\in \lim(U'),
\end{align*}
where $\lim(U')$ is the set of limit points of $U'$ (the complement of $U$). That is, $\rm{der}(U)=\lim(U')'$. Thus, $\rm{der}$ is the Cantor-Bendixson derivative of $\mathcal{U}(\EuScript{T})$. This is a prenucleus, and its ordinal closure derivative $\rm{der}^{\infty}$ is a nucleus. The ordinal $\infty$ is the Cantor-Bendixson rank of the tree. Also, observe that the coverage corresponding to $\rm{der}$ is 

\[U\vDash a\Longleftrightarrow (\forall x)[a\leq x\Rightarrow x\in U]\Longleftrightarrow  a\in \rm{der}(U).\]
Since 
\[U\vdash a\Longleftrightarrow (\forall\xi)[a\in\xi\Rightarrow\xi\cap U\neq\emptyset]\Longleftrightarrow  a\in {\rm ker}(U),\]
we have that $\rm{der}\leq {\rm ker}$, and thus $\rm{der}\leq {\rm ker}\leq {\rm ler}$. Now, we say that $U\in\mathcal{U}(\EuScript{T})$ is \emph{perfect} if $\rm{der}(U)=U$, which is equivalent to \[U\vDash a\Longleftrightarrow a\in U\] or \[\upharpoonright \!(a)\subseteq U\Rightarrow a\in U.\] This condition can be thought as a \emph{generalized} Bar induction. For more details, see, e.g., \cite{gambino2007spatiality}. 

\begin{dfn}\label{genbar}
For a tree base $\EuScript{T}$ of a non-archimedean frame $A$, $\EuScript{T}$ satisfies the \emph{generalized monotone bar induction}, denoted as $\mathrm{GBI}(\EuScript{T})$ (or simply $\mathrm{GBI}$), if \[U\Vdash a \Rightarrow a\in U\] for each perfect $U\in\mathcal{U}(\EuScript{T})$ and $a\in\EuScript{T}$.
\end{dfn}
Obviously, $\mathrm{GBI}(\EuScript{T})$ is equivalent to \[{\rm ler}(U)=U\Longleftrightarrow \rm{der}(U)=U.\] Note that $U$ is perfect if and only if $U\in\mathcal{U}(\EuScript{T})_{\rm{der}}=\mathcal{U}(\EuScript{T})_{\rm{der}^{\infty}}$. Since \[\rm{der}^{\infty}\leq {\rm ker}\leq {\rm ler},\] then, if $U$ is perfect and $\mathrm{GBI}(\EuScript{T})$ holds, we have \[\rm{der}^{\infty}={\rm ker}={\rm ler},\] which in quotient language says \[\mathcal{U}(\EuScript{T})_{\rm{der}^{\infty}}\cong\mathcal{O}(\Xi(\EuScript{T}))\cong A.\]
%Think $\mathrm{GBI}(\EuScript{T})$ as a partial axiom of choice.
The following theorem summarizes these comments.  Consequently, it gives some conditions on the injectivity of the morphism $\eta\colon\mathcal{O}(\Xi(\EuScript{T}))\rightarrow A$. In general, we do not have a characterization for non-archimedean frames $A$ for which $\eta$ is injective. Nevertheless, we have the following interesting fact.

\begin{thm}\label{branchi}
Let $(A,\EuScript{T}(\mathcal{B}))$ be any non-archimedean frame with tree base $\EuScript{T}(\mathcal{B})$. Then, the following conditions are equivalent:

\begin{enumerate}[(i)]
\item $\mathrm{GBI}(\EuScript{T}(\mathcal{B}))$ holds.
\item ${\rm der}^{\infty}={\rm ker}={\rm ler}$.
\item If ${\rm der}(U)=U$, then ${\rm ler}(U)=U$ with $U\in\mathcal{U}(\EuScript{T})$.
\item $\mathcal{U}(\EuScript{T})_{{\rm der}}$ is a spatial frame.
\end{enumerate}
\end{thm}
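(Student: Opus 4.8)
The plan is to handle the three order-theoretic conditions (i), (ii), (iii) together by pure fixed-point bookkeeping, and then splice in the topological condition (iv). Throughout I use two standard facts about the assembly: if $j\le k$ are (pre)nuclei then $\mathrm{Fix}(k)\subseteq\mathrm{Fix}(j)$ (since $x=k(x)\ge j(x)\ge x$ for $x\in\mathrm{Fix}(k)$), and a nucleus is determined by its fixed points via $j(x)=\bigwedge\{y\in\mathrm{Fix}(j)\mid x\le y\}$. I also use the sandwich ${\rm der}^{\infty}\le{\rm ker}\le{\rm ler}$ established above, together with $\mathrm{Fix}({\rm der})=\mathrm{Fix}({\rm der}^{\infty})$ (i.e. $\mathcal{U}(\mathcal{T})_{{\rm der}}=\mathcal{U}(\mathcal{T})_{{\rm der}^{\infty}}$).

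First I would record that, unwinding Definition \ref{genbar} and the coverage meaning of $\Vdash$, the assertion $\mathrm{GBI}(\mathcal{T}(\mathcal{B}))$ says exactly that every perfect $U$ (i.e. $U\in\mathrm{Fix}({\rm der})$) satisfies ${\rm ler}(U)=U$; that is, $\mathrm{Fix}({\rm der})\subseteq\mathrm{Fix}({\rm ler})$. This is verbatim condition (iii), so (i)$\Leftrightarrow$(iii) is immediate. For (iii)$\Rightarrow$(ii) I combine (iii) with the sandwich: the first fact gives $\mathrm{Fix}({\rm ler})\subseteq\mathrm{Fix}({\rm ker})\subseteq\mathrm{Fix}({\rm der}^{\infty})=\mathrm{Fix}({\rm der})$, while (iii) supplies the reverse inclusion $\mathrm{Fix}({\rm der})\subseteq\mathrm{Fix}({\rm ler})$; hence all three fixed-point sets coincide, and since a nucleus is determined by its fixed points, ${\rm der}^{\infty}={\rm ker}={\rm ler}$. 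The converse (ii)$\Rightarrow$(iii) is trivial. This settles the equivalence of (i), (ii), (iii) with no appeal to spatiality.

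It remains to fold (iv) into the cycle. The direction (ii)$\Rightarrow$(iv) is easy: if ${\rm der}^{\infty}={\rm ker}$ then $\mathcal{U}(\mathcal{T})_{{\rm der}}=\mathcal{U}(\mathcal{T})_{{\rm ker}}\cong\mathcal{O}(\Xi\mathcal{T})$, and the latter, being literally a topology on the set of branches, is spatial. For (iv)$\Rightarrow$(i) I would argue by contraposition using points. Assume $\mathcal{U}(\mathcal{T})_{{\rm der}}$ is spatial and let $U$ be perfect with some $a\notin U$; the goal is to produce a branch witnessing $\neg(U\Vdash a)$. Since $a\in\,\uparrow\! a\subseteq{\rm der}^{\infty}(\uparrow\! a)$ while $a\notin U$, we have ${\rm der}^{\infty}(\uparrow\! a)\nleq U$ in $\mathcal{U}(\mathcal{T})_{{\rm der}}$; spatiality then yields a point $p$ (a completely prime, ${\rm der}$-stable filter) with $p({\rm der}^{\infty}(\uparrow\! a))=1$ and $p(U)=0$. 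Identifying such a point with a branch $\xi$ of $\mathcal{T}$, one obtains a branch through $a$ with $\xi\cap U=\emptyset$, which is precisely the negation of the coverage relation $U\Vdash a$ of Definition \ref{cov}. Hence $U\Vdash a\Rightarrow a\in U$, i.e. $\mathrm{GBI}(\mathcal{T}(\mathcal{B}))$ holds, closing the cycle.

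The step I expect to be the main obstacle is the identification of a ${\rm der}$-stable completely prime filter of $\mathcal{U}(\mathcal{T})_{{\rm der}}$ with an honest branch of the tree, together with the verification that ``$\xi$ passes through $a$ and avoids $U$'' is exactly $\neg(U\Vdash a)$. This is the point-free incarnation of the Cantor--Bendixson/branch-space correspondence: the spatialization of $\mathcal{U}(\mathcal{T})_{{\rm der}}$ is carried by the branch space, and it is here that the coverage calculus of \cite{simmons2007coverage,gambino2007spatiality}, and the level-by-level analysis of $\pt(A)$ behind Lemma \ref{alfa}, do the real work; the subtlety is that the derivative kills the isolated (principal) filters and one must check that the survivors are genuine branches. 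Everything else is bookkeeping with fixed-point sets. Once this identification is in place the four conditions close into a single equivalence, and as a by-product one reads off that $\eta\colon\mathcal{O}(\Xi\mathcal{T})\to A$ is injective precisely when any, hence all, of (i)--(iv) hold.
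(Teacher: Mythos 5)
Your handling of (i), (ii), (iii) and of (ii)$\Rightarrow$(iv) is correct and is essentially the paper's own argument: the paper likewise gets ${\rm der}^{\infty}={\rm ker}={\rm ler}$ from GBI via the sandwich ${\rm der}\leq{\rm ker}\leq{\rm ler}$ together with $\mathcal{U}(\mathcal{T})_{{\rm der}}=\mathcal{U}(\mathcal{T})_{{\rm der}^{\infty}}$, and obtains (iv) from $\mathcal{U}(\mathcal{T})_{{\rm der}^{\infty}}\cong\mathcal{O}(\Xi(\mathcal{T}))$. The genuine gap is in (iv)$\Rightarrow$(i), where you have bracketed off precisely the step that carries the mathematical content. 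You write that spatiality yields a point $p$ with $p({\rm der}^{\infty}(\uparrow\! a))=1$ and $p(U)=0$, and that ``identifying such a point with a branch'' produces a branch through $a$ avoiding $U$. That identification is not a citable background fact for this frame --- it is the theorem; you flag it yourself as ``the main obstacle'' and then do not supply it. The paper does the work: a separating point is realized as a $\wedge$-irreducible ${\rm der}$-fixed element $\EuScript{P}\in\mathcal{U}(\mathcal{T})$, whence $\lim(\EuScript{P}')'=\EuScript{P}$, so the complement $\EuScript{P}'$ is a directed lower section with no maximal elements, and the branch is then manufactured \emph{inside} $\EuScript{P}'$ by Zorn's lemma; it avoids $U$ exactly because $U\subseteq\EuScript{P}$, i.e.\ $U\cap\EuScript{P}'=\varnothing$. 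Without this conversion of points into branches, the direction (iv)$\Rightarrow$(i) is unproved.

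There is also a concrete technical flaw in your choice of witness. From $p({\rm der}^{\infty}(\uparrow\! a))=1$ the most you can extract, even after the identification, is a branch meeting ${\rm der}^{\infty}(\uparrow\! a)$ --- and ${\rm der}$ genuinely inflates $\uparrow\! a$ by nodes that need not lie above $a$: any node $x$ with $\upharpoonright\!(x)=\varnothing$ satisfies $x\in{\rm der}(V)$ for \emph{every} $V$ vacuously, and transfinite iteration adds further such nodes, so the branch may pass through a leaf or limit node of ${\rm der}^{\infty}(\uparrow\! a)$ without ever meeting $\uparrow\! a$; hence ``a branch through $a$'' does not follow as stated. The paper avoids this by starting instead from $\downarrow\!(a)\nsubseteq U$ and separating ${\rm der}(\uparrow\!\downarrow\!(a))$ from $U$: from $x\in{\rm der}(\uparrow\!\downarrow\!(a))$ with $x\notin\EuScript{P}$ one extracts some $e\leq a$ with $e\leq x$, so $e\in\EuScript{P}'$, and the Zorn branch through $e$ inside $\EuScript{P}'$ already negates $U\Vdash a$ --- no branch through $a$ itself is needed (nor is one obtainable from this data). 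So your skeleton for (iv)$\Rightarrow$(i) has the right shape, but the two steps you deferred are the proof, and one of them, as you set it up with $\uparrow\! a$, would fail.
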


\begin{proof}
Suppose (i) and consider any $U$ such that ${\rm der}^{\infty}(U)=U$. Since $\mathrm{GBI}$ holds, we have $U\Vdash a \Rightarrow a\in U$. That is, ${\rm ler}(U)=U$. The other comparison follows immediately from ${\rm der}\leq {\rm ker}\leq {\rm ler}$.

Note that, since $\mathcal{U}(\EuScript{T})_{{\rm der}^{\infty}}=\mathcal{U}(\EuScript{T})_{{\rm der}}$, we have that (ii) if and only if (iii).

If (iii) holds, we have \[\mathcal{U}(\EuScript{T})_{{\rm der}^{\infty}}\cong\mathcal{O}(\Xi(\EuScript{T}))\cong A.\] In particular, this amounts to the spatiality of $\mathcal{U}(\EuScript{T})_{{\rm der}}$, which is exactly (iv).

Finally, we show that (iv) implies $\mathrm{GBI}$. If (iv) holds, we have that $\mathcal{U}(\EuScript{T}(\mathcal{B}))_{{\rm der}^{\infty}}$ has enough points separating distinct elements. These points are points of $\mathcal{U}(\EuScript{T}(\mathcal{B}))$ for which ${\rm der}(\EuScript{P})=\EuScript{P}$. Then, for each of these points, we have $\lim(\EuScript{P}')'=\EuScript{P}$, that is, $\EuScript{P}'$ is a directed lower section with no maximal elements. Now, consider $U\Vdash a$ and suppose $a\notin U$. Then, $\downarrow \! \!(a)\nsubseteq U$. Since $\downarrow \! \!(a)\subseteq\uparrow \!\downarrow \! \! (a)$ and ${\rm der}$ inflates,  we conclude that \[{\rm der}(\uparrow \!\downarrow \! \!(a))\nsubseteq U.\]

Thus, the spatial condition ensures the existence of a (separating) point  $\EuScript{P}$ of $\mathcal{U}(\EuScript{T}(\mathcal{B}))_{{\rm der}}$ such that ${\rm der}(\uparrow \!\downarrow \! \!(a))\nsubseteq\EuScript{P}$ and $U\subseteq\EuScript{P}$. Since ${\rm der}(\uparrow \!\downarrow \! \!(a))\nsubseteq\EuScript{P}$, there exists $x\in {\rm der}(\uparrow \!\downarrow \! \!(a))$ such that $x\notin\EuScript{P}$. By definition of ${\rm der}$, we have \[x\in {\rm der}(\uparrow \!\downarrow \! \!(a))\Leftrightarrow \,\uparrow \!(x)\subseteq\uparrow \!\downarrow \! \!(a).\] Hence, there exists $e\leq a$ such that $e\leq x$ ($x\in\uparrow \!\downarrow \! \!(a)$). Therefore, $e\in\EuScript{P}'$ since $x\in\EuScript{P}'$. If we show that there exists a branch $\xi\subseteq\EuScript{P}'$ with $e\in\xi$, then we get a contradiction.

Since $\EuScript{P}'$ is a lower section with no maximal elements, then $e\in\EuScript{P}'$ is non-maximal. Thus, there exists $s_{1}\in\EuScript{P}'$ such that $e<s_{1}$. In this way, we construct a chain in $\EuScript{P}'$ \[\EuScript{C}(e)=e<s_{1}<\ldots<s_{i}<\ldots\]

Now, consider the set $\mathcal{C}_{e}=\{\EuScript{C}\subseteq\EuScript{P}'\mid\EuScript{C}\text{ is a chain with } e\in\EuScript{C}\}$. This partially ordered set is not empty due to $\EuScript{C}(e)\in\mathcal{C}_{e}$. It is clear that  the union of any directed family $\mathfrak{C}\subseteq\mathcal{C}_{e}$ is an upper bound in $\mathcal{C}_{e}$. Thus, by Zorn's Lemma, we have a maximal chain $\xi$ in $\mathcal{C}_{e}$. This implies  that  $e\in\xi$ and $\xi\cap U=\emptyset$, which is a contradiction.
\end{proof}

It is important to highlight that we do not have yet a general criteria to determine whenever a non-archimedean frame is spatial or not.

%It seems by the example that we know that the crucial aspect the Cantor-Bendixson process have something to do with this phenomena (possibly).

The following examples give some classical non-archimedean frames.
\begin{ej}\label{inductive tree}
Some well-known trees: 

\begin{enumerate}[(i)]

\item Cantor tree $\EuScript{C}$:
\[
\begin{tikzpicture}[grow'=up][sibling distance=72pt]
\Tree [.$\perp$ [.$\bullet$ 
[.$\bullet$    ]   [.$\bullet$  ] ]
[.$\bullet$ [.$\bullet$ $\ldots$ ]
[.$\bullet$  ] ] ]
\end{tikzpicture}
\]

\item Baire tree  $\EuScript{B}$:

\[\begin{tikzpicture}[grow'=up]
  \Tree [.$\perp$
    [.$\bullet$ [.$\bullet$ $\ldots$ ]
    [.$\bullet$ ] [.$\bullet$ ] [.$\bullet$ ] [.$\ldots$ ] ]
    [.$\bullet$ ] [.$\bullet$ ] [.$\bullet$ ] [.$\ldots$ ]
  ]
\end{tikzpicture}\]
  
\item Köing Tree  $\EuScript{K}$:

\[\begin{tikzpicture}[grow'=up]
  \Tree [.$\perp$
    [.$\bullet$  ]
    [.$\bullet$ 
    [.$\bullet$ ] [.$\bullet$  ] [.$\bullet$ [.$\bullet$ [.$\bullet$ $\ldots$ ] [.$\bullet$ ] [.$\bullet$ ] [.$\ldots$ ] ] ] ] [.$\bullet$ ] [.$\bullet$ [.$\ldots$ ] ] 
  ]
\end{tikzpicture}\]

\end{enumerate}

\end{ej}

In these classic examples, it is well-known the following:
\begin{enumerate}[(i)]
\item  The Cantor-Bendixson rank of $\EuScript{C}$ is at most $\omega$.

\item The Cantor-Bendixson rank of $\EuScript{B}$ is $\Omega$ (the least uncountable ordinal).

\item The Cantor-Bendixson rank of $\EuScript{K}$ is $\omega+1$.  
\end{enumerate}

If $\EuScript{T}\in\{\EuScript{C},\EuScript{B},\EuScript{K}\}$,  the non-archimedean frame $A_{\EuScript{T}}=\mathcal{L}(\EuScript{T})$ satisfies  $\pt\mathcal{L}(\EuScript{T})\cong\Xi(\EuScript{T})$ and  $A_{\EuScript{T}}\cong\mathcal{O}\Xi(\EuScript{T})$. Basically, this is the content in the constructive setting of  \cite{gambino2007spatiality}) or, in general, it is a consequence of our theory in the presence of full choice.

It is important to mention that $\mathcal{L}(\EuScript{C}
)\cong\mathcal{O}(\mathbb{Z}_{p})\cong\mathcal{L}(\mathbb{Z}_{p})$,  where $\mathcal{O}(\mathbb{Z}_{p})$ is the ultrametric topology, for more details \cite{avila2022cantor}. In \cite{avila2020frame} and \cite{Avila}, the author introduces the frame of the $p$-adic numbers $\mathcal{L}(\mathbb{Q}_{p})$ and it is observed that this frame is spatial; in particular, it is isomorphic to the frame of opens of the topology of the non-archimedean space $\mathbb{Q}_{p}$. From that paper and our theory, we obtain the same result with the construction of a suitable tree for the base of that non-archimedean space. Therefore,  the topology of this tree is just $\mathcal{O}(\mathbb{Q}_{p})$, and thus \[\mathcal{L}(\mathbb{Q}_{p})\cong\mathcal{O}(\mathbb{Q}_{p}).\] 

\section{Acknowledgments}

We would like to express our sincere gratitude to Guram Bezhanishvili for his careful reading of this work. His insightful comments and constructive suggestions have significantly contributed to enhancing the quality of this manuscript. Research supported by CONAHCYT-PROJECT CBF2023-2024-2630.

\printbibliography
\end{document}